\def\ps@pprintTitle{
 \let\@oddhead\@empty
 \let\@evenhead\@empty
 \def\@oddfoot{\centerline{\thepage}}
 \let\@evenfoot\@oddfoot}
\newtheorem{theorem}{Theorem}
\newtheorem{definition}{Definition}
\newtheorem{lemma}{Lemma}
\newtheorem{remark}{Remark}
\newenvironment{proof}[1][Proof]{\noindent\textbf{#1.} }{\ \rule{0.5em}{0.5em}}
\begin{document}

\begin{frontmatter}%
\title{Estimation of Stable Distribution Parameters from a Dependent Sample}%
\author{A. W. Barker}%
\address{Department of Statistics, Macquarie University, Sydney, NSW 2109 Australia }%

\begin{abstract}
Existing methods for the estimation of stable distribution parameters, such as those based on sample quantiles, sample characteristic functions or
maximum likelihood generally assume an independent sample. Little attention has been paid to estimation from a dependent sample.
In this paper, a method for the estimation of stable distribution parameters from a dependent sample is proposed based on the sample quantiles.
The estimates are shown to be asymptotically normal. The asymptotic variance is calculated for stable moving average processes.
Simulations from stable moving average (\textsc{sma}) processes are used to demonstrate these estimators.
\end{abstract}
\begin{keyword}%
Quantile \sep Stable\ Distribution \sep Moving Average

\end{keyword}%
\end{frontmatter}%

\section{Introduction}
\label{sec:int}
A number of methods have been proposed for the estimation of the parameters of a stable distribution have been proposed. 
A method based on sample quantiles was proposed by \cite{ci:FR71} which was simple to implement, but was only
applicable to symmetric stable distributions with $\alpha \geq 1$ and contained a slight bias. This method was extended by
\cite{ci:M86} to cover asymmetric stable distributions and which is asymptotically unbiased. Other methods have been proposed
based on the sample characteristic function, (\cite{ci:P72}, \cite{ci:PHL75} and \cite{ci:KW98}).  Maximum likelihood estimation
methods have been proposed by \cite{ci:BY90} and \cite{ci:N01}. For a discussion on the use of indirect inference for the estimation
of stable distributions, see \cite{ci:GRV11}.

The sample quantile method of \cite{ci:M86} assumes an independent sample. The primary goal of this paper is to investigate the
extension of this method to cover dependent samples. For that purpose we use results on quantile estimation from dependent samples
which show under certain conditions, these estimates are consistent and asymptotically normal (e.g. \cite{ci:S68} and \cite{ci:DHOV13}).
We conclude with some simulations.

Throughout this paper we use the $\textsc{sma}$(q) process as an example of a dependent stable process. An $\textsc{sma}$(q) process 
$\{X_t\}$ is defined as follows
\begin{equation}
X_t = \sum_{j=0}^{q}\theta_j \varepsilon_{t-j}
\label{eq:int:1}
\end{equation}
where $\theta_0=1$ and $\{\varepsilon_t\}$ is an independent identically distributed (iid) sequence of stable random variables such that
\begin{equation}
\varepsilon_t \sim S_{\alpha }^{0}\left( \beta_0 ,\gamma_0 ,\delta_0 \right)
\label{eq:int:2}
\end{equation}
using the $S^0$ parameterisation of stable random variables in \cite{ci:N98}. Using the properties of the $S^0$ parameterisation
given in Lemma 1, \cite{ci:N98} it can be shown that  $X_t$ also has a stable distribution,
\begin{equation}
X_t \sim S_{\alpha }^{0}\left( \beta ,\gamma ,\delta \right).
\label{eq:int:3}
\end{equation}
 Formulae for the stable distribution parameters of $X_t$ in terms of the $\textsc{sma}$(q) process parameters can be found in \cite{ci:B14}.

\section{Quantile Estimation from a Dependent Sample}
\label{sec:qe}
For any real-valued random variable $X$ on a probability space $\left(
\Omega ,\mathcal{F},P\right) ,$ there is an associated distribution function 
$F:\mathbb{R} \rightarrow \left[ 0,1\right] $ defined by
\begin{equation}
F\left( x\right) \equiv P\left( X\leq x\right) .
\label{eq:qe:1a}
\end{equation}
The $p$th quantile, $\xi_{p}$, of $F$ is defined by
\begin{equation}
\xi_{p} \equiv \inf \left\{ x:F\left( x\right) \leq p\right\}.
\label{eq:qe:1b}
\end{equation}
The density function, $f:\mathbb{R} \rightarrow \mathbb{R}_{+}$, of $F$ is defined by
\begin{equation}
F\left( x\right) \equiv \int_{-\infty }^{x}f\left( s\right) ds.
\label{eq:qe:2}
\end{equation}

Let $\left\{ x_{j}\right\} _{j=1}^{n}$ be a sample drawn from random variables with the distribution function $F$. From this sample we define the empirical distribution function
and empirical quantile estimators by 
\begin{equation}
\widehat{F}_{n}\left( x\right) =\dfrac{1}{n}\sum_{j=1}^{n}I_{\left( -\infty,x\right] }\left( x_{j}\right),
\label{eq:qe:3}
\end{equation}
and
\begin{equation}
\widehat{\xi }_{p}=\inf \left\{ x:\widehat{F}_{n}\left( x\right) \geq p\right\}.
\label{eq:qe:4}
\end{equation}
There is an extensive literature about the statistical properties of the empirical estimators
(e.g. \cite{ci:C46} and \cite{ci:S80}).

The following theorems assume that $\left\{ x_{j}\right\} _{j=1}^{n}$ is an
iid sample. The first theorem shows that the
empirical quantile estimator has strong consistency wherever the underlying
distribution function is not flat.

\begin{theorem}
\label{th:qe:1}\textbf{(Strong Consistency of }$\widehat{\xi }_{p}$ - 
\textbf{\cite{ci:S80}, Theorem 2.3.1).} Let $0<p<1.$ If $\xi _{p}$ is the
unique solution $x$ of $F\left( x_{-}\right) \leq p\leq F\left( x\right) ,$
then $\widehat{\xi }_{p}$ is a strongly consistent estimator of $\xi _{p}.$
\end{theorem}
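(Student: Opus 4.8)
The plan is to reduce the almost-sure convergence of $\widehat{\xi}_p$ to the almost-sure convergence of $\widehat{F}_n$ at suitably chosen points, and then to invert the relationship between the empirical distribution function and the empirical quantile. First I would observe that for each fixed $x$ the indicators $I_{(-\infty,x]}(x_j)$, $j=1,\dots,n$, are iid Bernoulli random variables with mean $F(x)$, so that $\widehat{F}_n(x)$ is simply their sample average. By the strong law of large numbers, $\widehat{F}_n(x)\to F(x)$ almost surely for each fixed $x$. Applying this at the countable collection of points $\xi_p\pm 1/k$, $k\in\mathbb{N}$, and intersecting the corresponding null sets, I obtain a single event of probability one on which $\widehat{F}_n(\xi_p\pm 1/k)\to F(\xi_p\pm 1/k)$ for every $k$.

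Next I would exploit the uniqueness hypothesis and convert it into strict inequalities for $F$. Because $\xi_p$ is the unique solution of $F(x_-)\le p\le F(x)$, the distribution function cannot be flat at the level $p$ on either side of $\xi_p$; consequently $F(\xi_p-\epsilon)<p<F(\xi_p+\epsilon)$ for every $\epsilon>0$. This is the step where the hypothesis is genuinely used, and it is where I expect the only real care to be needed: one must rule out the possibility that $F$ equals $p$ on a nondegenerate interval abutting $\xi_p$, since such a plateau would produce a second solution of $F(x_-)\le p\le F(x)$ and contradict uniqueness.

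With these ingredients in place the remainder is routine. Fix $\epsilon=1/k$ and work on the probability-one event above. Since $\widehat{F}_n(\xi_p-\epsilon)\to F(\xi_p-\epsilon)<p$, for all large $n$ we have $\widehat{F}_n(\xi_p-\epsilon)<p$, and the definition $\widehat{\xi}_p=\inf\{x:\widehat{F}_n(x)\ge p\}$ together with the monotonicity of $\widehat{F}_n$ then forces $\widehat{\xi}_p\ge\xi_p-\epsilon$. Symmetrically, since $\widehat{F}_n(\xi_p+\epsilon)\to F(\xi_p+\epsilon)>p$, for all large $n$ we have $\widehat{F}_n(\xi_p+\epsilon)\ge p$, whence $\widehat{\xi}_p\le\xi_p+\epsilon$. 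Thus $|\widehat{\xi}_p-\xi_p|\le\epsilon$ for all sufficiently large $n$, and letting $k\to\infty$ along the countable sequence yields $\widehat{\xi}_p\to\xi_p$ on the probability-one event, which is the asserted strong consistency. As a slicker substitute for the pointwise strong law in the first step one may instead invoke the Glivenko--Cantelli theorem for uniform almost-sure convergence of $\widehat{F}_n$ to $F$; I would note this, though the pointwise version suffices and keeps the argument self-contained.
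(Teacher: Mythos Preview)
Your argument is correct and is essentially the standard proof given in Serfling (1980), Theorem~2.3.1. Note, however, that the present paper does not supply its own proof of this statement: the theorem is quoted from Serfling as background, with no proof reproduced here. So there is nothing in the paper to compare your proposal against beyond the citation, and your write-up matches the cited source's approach (pointwise SLLN or Glivenko--Cantelli for $\widehat{F}_n$, strict inequalities $F(\xi_p-\epsilon)<p<F(\xi_p+\epsilon)$ from the uniqueness hypothesis, then the sandwich on $\widehat{\xi}_p$). One small remark: the left strict inequality $F(\xi_p-\epsilon)<p$ already follows from the definition of $\xi_p$ as an infimum and does not require uniqueness; only the right inequality $F(\xi_p+\epsilon)>p$ genuinely uses the hypothesis, since a plateau $F\equiv p$ to the right of $\xi_p$ would produce a second solution of $F(x_-)\le p\le F(x)$.
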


The next theorem shows that the empirical quantile estimator is
asymptotically normal under some conditions on the underlying distribution
function (See also \cite{ci:C46})

\begin{theorem}
\label{th:qe:2}\textbf{(Asymptotic Normality of Empirical Quantile
Estimator - \cite{ci:S80}, Corollary 2.3.3B).} For $0<p<1,$ if $F$ possesses
a density $f$ in a neighbourhood of $\xi _{p}$ and if $f$ is positive and
continuous at $\xi _{p},$ then 
\begin{equation}
\widehat{\xi }_{p}\text{ is }AN\left( \xi _{p},\dfrac{p\left( 1-p\right) }{f^{2}\left( \xi _{p}\right) n}\right).
\label{eq:qe:5}
\end{equation}
\end{theorem}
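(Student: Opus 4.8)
The plan is to identify the limiting distribution function of $\sqrt{n}\left(\widehat{\xi}_{p}-\xi_{p}\right)$ directly, exploiting the duality between the empirical quantile and the empirical distribution function. First I would observe that, since $\widehat{F}_{n}$ is nondecreasing and right-continuous, the definition \eqref{eq:qe:4} gives the equivalence $\left\{\widehat{\xi}_{p}\leq x\right\}=\left\{\widehat{F}_{n}\left(x\right)\geq p\right\}$ for each real $x$, because the solution set $\left\{y:\widehat{F}_{n}\left(y\right)\geq p\right\}$ is exactly the half-line $[\widehat{\xi}_{p},\infty)$. Fixing $t\in\mathbb{R}$ and writing $x_{n}=\xi_{p}+t/\sqrt{n}$, this converts the event of interest into a statement about a binomial count,
\[
P\left(\sqrt{n}\left(\widehat{\xi}_{p}-\xi_{p}\right)\leq t\right)=P\left(\widehat{F}_{n}\left(x_{n}\right)\geq p\right)=P\left(\sum_{j=1}^{n}I_{\left(-\infty,x_{n}\right]}\left(x_{j}\right)\geq np\right),
\]
where the indicator summands are iid Bernoulli variables with success probability $F\left(x_{n}\right)$.

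Next I would centre and scale the count and apply the central limit theorem. Setting $Z_{n}=\left(\sum_{j=1}^{n}I_{\left(-\infty,x_{n}\right]}\left(x_{j}\right)-nF\left(x_{n}\right)\right)/\sqrt{nF\left(x_{n}\right)\left(1-F\left(x_{n}\right)\right)}$, the event above is $\left\{Z_{n}\geq c_{n}\right\}$ with
\[
c_{n}=\frac{\sqrt{n}\left(p-F\left(x_{n}\right)\right)}{\sqrt{F\left(x_{n}\right)\left(1-F\left(x_{n}\right)\right)}}.
\]
Because $f$ is positive and continuous at $\xi_{p}$, the function $F$ is continuous there, so the infimum defining $\xi_{p}$ forces $F\left(\xi_{p}\right)=p$, and differentiability of $F$ at $\xi_{p}$ gives the expansion $F\left(x_{n}\right)=p+f\left(\xi_{p}\right)t/\sqrt{n}+o\left(1/\sqrt{n}\right)$. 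Substituting shows $F\left(x_{n}\right)\to p\in\left(0,1\right)$ and $c_{n}\to -t\,f\left(\xi_{p}\right)/\sqrt{p\left(1-p\right)}$.

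Finally I would pass to the limit. Since the Bernoulli parameters $F\left(x_{n}\right)$ converge to $p\in\left(0,1\right)$, the Lindeberg condition for this triangular array is satisfied and $Z_{n}$ converges in distribution to a standard normal variable $Z$. The remaining task is to justify $P\left(Z_{n}\geq c_{n}\right)\to P\left(Z\geq c\right)$ even though the threshold $c_{n}$ drifts with $n$; for this I would invoke P\'olya's theorem, which upgrades the convergence of the distribution functions of $Z_{n}$ to uniform convergence because the normal limit is continuous, so the moving threshold presents no difficulty. The conclusion is
\[
P\left(\sqrt{n}\left(\widehat{\xi}_{p}-\xi_{p}\right)\leq t\right)\to\Phi\left(\frac{t\,f\left(\xi_{p}\right)}{\sqrt{p\left(1-p\right)}}\right),
\]
which is precisely the distribution function of $N\left(0,p\left(1-p\right)/f^{2}\left(\xi_{p}\right)\right)$ evaluated at $t$, giving \eqref{eq:qe:5}. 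I expect the main obstacle to be the triangular-array bookkeeping: the central limit theorem must be applied with a Bernoulli parameter that drifts with $n$, and the convergence of the moving threshold must be combined with the distributional limit, which is exactly where the uniform-convergence argument is needed in place of a naive Slutsky substitution.
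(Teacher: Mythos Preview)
The paper does not supply its own proof of this theorem: it is stated without argument and attributed to \cite{ci:S80}, Corollary 2.3.3B, as background material. Your proposal is correct and is in fact the classical derivation, essentially the one given in Serfling's book: exploit the equivalence $\{\widehat{\xi}_{p}\leq x\}=\{\widehat{F}_{n}(x)\geq p\}$, reduce to a binomial tail probability with success parameter $F(\xi_{p}+t/\sqrt{n})$, expand $F$ locally using the density, and apply a triangular-array central limit theorem together with uniform convergence of the normal distribution function to handle the drifting threshold. There is nothing to compare against in the present paper, and no gap in your argument.
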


Theorem \ref{th:qe:2} can be extended to cover the estimation of multiple quantiles from a single sample.

\begin{theorem}
\label{th:qe:3}\textbf{(Asymptotic Covariances of Empirical Quantile
Estimators - \cite{ci:S80}, Theorem 2.3.3B).} Let $0<p_{1}<\cdots <p_{k}<1.$
Suppose that $F$ has a density $f$ in a neighbourhoods of $\xi
_{p_{1}},\ldots ,\xi _{p_{k}}$ and that $f$ is positive and continuous at $
\xi _{p_{1}},\ldots ,\xi _{p_{k}}.$ Let $\widehat{\xi}=\left( \widehat{
\xi }_{p_{1}},\ldots ,\widehat{\xi }_{p_{k}}\right) ^{\prime }$ denote
the empirical quantiles estimates of $\xi =\left( \xi _{p_{1}},\ldots ,\xi
_{p_{k}}\right) ^{\prime },$ then
\begin{equation}
\sqrt{n}\left( \widehat{\xi }-\xi \right) \overset{d}{\longrightarrow } N\left( 0,\Sigma \right).
\label{eq:qe:7}
\end{equation}
The element in the $ith$ row and $jth$ column of $\Sigma $ is given by 
\begin{equation}
\sigma _{ij}=\left\{ 
\begin{array}{cc}
\dfrac{p_{i}\left( 1-p_{j}\right) }{f\left( \xi _{p_{i}}\right) f\left( \xi
_{p_{j}}\right) } & \text{ for }i\leq j \\ 
\dfrac{p_{j}\left( 1-p_{i}\right) }{f\left( \xi _{p_{i}}\right) f\left( \xi
_{p_{j}}\right) } & \text{ for }i>j%
\end{array}
\right.
\label{eq:qe:8}
\end{equation}
\end{theorem}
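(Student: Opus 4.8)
The plan is to establish the joint asymptotic normality of the vector of empirical quantile estimators by reducing the problem to the known one-dimensional result (Theorem~\ref{th:qe:2}) together with the multivariate central limit theorem applied to the empirical distribution function. The central device is the \emph{Bahadur representation}, which expresses each quantile estimator as an asymptotically linear functional of the empirical distribution function. Specifically, I would first record that under the stated smoothness and positivity assumptions on $f$ at each $\xi_{p_i}$, one has the expansion
\begin{equation}
\widehat{\xi}_{p_i} = \xi_{p_i} + \frac{p_i - \widehat{F}_n\left(\xi_{p_i}\right)}{f\left(\xi_{p_i}\right)} + R_{n,i},
\label{eq:qe:plan:1}
\end{equation}
where the remainder $R_{n,i}$ is $o_P\left(n^{-1/2}\right)$. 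This representation is exactly the mechanism underlying the single-quantile result, so it is legitimate to invoke it here for each $i$ separately.

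Next I would treat the leading linear terms jointly. Writing $\widehat{F}_n\left(\xi_{p_i}\right) = \frac{1}{n}\sum_{j=1}^{n} I_{\left(-\infty,\xi_{p_i}\right]}\left(x_j\right)$, the vector $\left(\widehat{F}_n\left(\xi_{p_1}\right),\ldots,\widehat{F}_n\left(\xi_{p_k}\right)\right)^{\prime}$ is a sample mean of iid bounded random vectors of indicator functions. The multivariate central limit theorem then gives
\begin{equation}
\sqrt{n}\left(\widehat{F}_n\left(\xi_{p_i}\right) - p_i\right)_{i=1}^{k} \overset{d}{\longrightarrow} N\left(0,\Gamma\right),
\label{eq:qe:plan:2}
\end{equation}
where the covariance entries $\Gamma_{ij} = \operatorname{Cov}\left(I_{\left(-\infty,\xi_{p_i}\right]}(X), I_{\left(-\infty,\xi_{p_j}\right]}(X)\right)$ are computed from the fact that for $i \leq j$ the event $\{X \leq \xi_{p_i}\}$ is contained in $\{X \leq \xi_{p_j}\}$, yielding $\Gamma_{ij} = p_i\left(1-p_j\right)$ for $i \leq j$ and symmetrically $p_j\left(1-p_i\right)$ for $i>j$. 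Combining \eqref{eq:qe:plan:1} and \eqref{eq:qe:plan:2} via Slutsky's theorem (the remainder terms being negligible) transfers the limit to the quantile vector.

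Finally, the linear map in \eqref{eq:qe:plan:1} is diagonal, dividing the $i$th coordinate by $-f\left(\xi_{p_i}\right)$, so the limiting covariance of $\sqrt{n}\left(\widehat{\xi}-\xi\right)$ is obtained by sandwiching $\Gamma$ between the diagonal matrix $\operatorname{diag}\left(1/f\left(\xi_{p_i}\right)\right)$ on each side. This produces precisely the claimed entries $\sigma_{ij}$ in \eqref{eq:qe:8}, the sign cancelling in the quadratic form. I expect the main obstacle to be the rigorous control of the remainder $R_{n,i}$ in the Bahadur expansion, specifically verifying that it is uniformly $o_P\left(n^{-1/2}\right)$ across all $k$ quantiles simultaneously rather than merely coordinatewise; once that uniformity is in hand, the covariance computation is a routine consequence of the containment structure of the indicator events. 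Since the statement is quoted from \cite{ci:S80}, I would in practice cite that source for the remainder control and present the covariance calculation as the substantive verification.
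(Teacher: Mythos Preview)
Your proposal is correct and follows the standard route (Bahadur linearisation plus the multivariate CLT for indicator vectors, then a diagonal sandwich to pass from $\Gamma$ to $\Sigma$); this is indeed how the result is established in \cite{ci:S80}. Note, however, that the paper does not supply its own proof of this theorem: it is stated purely as a quotation of Theorem~2.3.3B from \cite{ci:S80}, so there is nothing in the paper to compare against beyond the citation itself, and your closing remark about citing that source for the remainder control is exactly what the paper does in full.
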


The asymptotic distributions listed in Theorems \ref{th:qe:2} and \ref{th:qe:3} only apply if the sample $\left\{ x_{j}\right\} _{j=1}^{n}$ is
iid. The asymptotic distribution of the empirical quantile estimator, where the sample is taken from a possibly non-stationary
m-dependent process was derived by \cite{ci:S68}. Further work in this area has been done by, amongst others: \cite
{ci:DS71} on autoregressive processes, \cite{ci:S72} on $\phi $ - mixing processes, \cite{ci:OH05} on non-stationary processes and \cite{ci:DHOV13}
on S-mixing processes. In this paper, we use the results of \cite{ci:DHOV13} for S-mixing processes.

\begin{definition}
\label{def:qe:1}
\textbf{(S-mixing Process - \cite{ci:BHS09}).} A process $\left\{ \mathbf{X}_{t}\right\} $ is called S-mixing if it satisfies the following conditions
\begin{enumerate}
\item For any $t\in \mathbb{Z}$ and $m\in \mathbb{N},$ one can find a random variable $\mathbf{X}_{tm}$ such that 
\begin{equation}
P\left( \left\vert \mathbf{X}_{t}-\mathbf{X}_{tm}\right\vert \geq \gamma_{m}\right) \leq \delta _{m}
\label{eq:qe:9}
\end{equation}
for some numerical sequences $\gamma _{m}\rightarrow 0,\delta_{m}\rightarrow 0.$
\item For any disjoint intervals $I_{1},\ldots ,I_{r}$ of integers and any positive integers $m_{1},\ldots ,m_{r},$ 
the vectors $\left\{ \mathbf{X}_{tm_{1}},t\in I_{1}\right\} ,\ldots ,\left\{ \mathbf{X}_{tm_{r}},t \in I_{r}\right\}$
are independent provided the separation between $I_{k}$ and $I_{l}$ is greater than $m_{k}+m_{l}.$
\end{enumerate}
\end{definition}

An \textsc{sma} process is an S-mixing process (\cite{ci:BHS09}) and also a $\phi$ - mixing process (e.g. \cite{ci:D94}). Other examples of S-mixing processes
can be found in \cite{ci:BHS09} and \cite{ci:DHOV13}.

Let 
\begin{equation}
\mathbf{X}_{t}=g\left( e_{t},e_{t-1},\ldots \right) ,\quad t\in \mathbb{Z}
\label{eq:qe:10}
\end{equation}
be a $k$ - dimensional process where $\left\{ e_{t}\right\} $ is an iid sequence of elements from the measurable space $\Omega $
and $g:\Omega^{\infty }\rightarrow \mathbb{R}^{k}$ is a measurable function. The following theorem from \cite{ci:DHOV13} provides the asymptotic distribution for the
empirical quantile estimators from a multivariate S-mixing process. Note that Theorem 6.5 in \cite{ci:S72} proves a similar result for $\phi $ - mixing processes

\begin{theorem}
\label{th:qe:4}
\textbf{(\cite{ci:DHOV13}, Theorem 1).} Let $\left\{\mathbf{X}_{t}\right\} $ be a stationary process satisfying 
$\left( \ref{eq:qe:10}\right) $ and let $\mathbf{\xi }=\left( \xi _{1},\ldots ,\xi_{k}\right) ^{\prime }$ denote the quantiles of
$\left\{ \mathbf{X}_{t}\right\} $ at $\mathbf{p}=\left( p_{1},\ldots ,p_{k}\right) ^{\prime }.$
Suppose that
\begin{description}
\item{A1} For each $i$ in $1,\ldots ,k$ the marginal distribution function $F_{i}\left( x\right) $ has a density $f_{i}\left( x\right) $ that is
positive and continuous in the neighbourhood of $\xi _{i}$ and $f_{i}\left(x\right) $ is uniformly bounded by some constant $B.$
\item{A2} The process $\left\{ \mathbf{X}_{t}\right\} $ is S-mixing with coefficients $\gamma _{m}=\delta _{m}=O\left( m^{-A}\right) $ where $A>4.$
\end{description}
Then 
\begin{equation}
\sqrt{n}\left( \widehat{\mathbf{\xi }}-\mathbf{\xi }\right) \overset{d}{\longrightarrow }N\left( 0,\Sigma \right)
\label{eq:qe:11}
\end{equation}
where
\begin{eqnarray}
\Sigma &=&V^{-1}QV^{-1},
\label{eq:qe:12} \\
V &=&diag\left( f_{1}\left( \xi _{1}\right) ,\ldots ,f_{k}\left(\xi _{k}\right) \right),
\label{eq:qe:13} \\
Q &=&\sum_{h\in \mathbb{Z}}E\left[ Q_{0}Q_{h}^{\prime }\right],
\label{eq:qe:14} \\
Q_{j} &=&\left( I\left\{ X_{j;1}\leq \xi _{1}\right\} -p_{1},\ldots,I\left\{ X_{j;k}\leq \xi _{k}\right\} -p_{k}\right) .
\label{eq:qe:15}
\end{eqnarray}
The element is the $ith$ row and $jth$ column of the matrix $\Sigma $ is given by
\begin{equation}
\sigma _{ij}=\dfrac{\sum_{h\in \mathbb{Z}}\left( P\left( \left\{ X_{t;i}\leq \xi _{i}\right\} \cap \left\{X_{t+h;j}\leq \xi _{j}\right\} \right) -p_{i}p_{j}\right) }
{f_{i}\left( \xi_{i}\right) f_{j}\left( \xi _{j}\right) }.
\label{eq:qe:16}
\end{equation}
\end{theorem}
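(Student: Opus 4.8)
The plan is to reduce the joint statement to a central limit theorem for the partial sums of the indicator vectors $Q_t$ of $\left(\ref{eq:qe:15}\right)$, via a coordinatewise Bahadur linearisation, and then to identify the limiting covariance. Fix a coordinate $i$, let $\widehat{F}_{n,i}$ be the empirical distribution function of the $i$th coordinates $\{X_{t;i}\}$, and write $Q_{t;i}=I\{X_{t;i}\le\xi_i\}-p_i$ together with the marginal empirical process $\alpha_{n,i}(x)=\sqrt{n}\bigl(\widehat{F}_{n,i}(x)-F_i(x)\bigr)$, so that $\alpha_{n,i}(\xi_i)=n^{-1/2}\sum_{t=1}^{n}Q_{t;i}$. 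Since $\widehat{F}_{n,i}(\widehat{\xi}_i)=p_i+O(n^{-1})$ by definition $\left(\ref{eq:qe:4}\right)$ and A1 permits the expansion $F_i(\widehat{\xi}_i)=p_i+f_i(\xi_i)(\widehat{\xi}_i-\xi_i)+o(|\widehat{\xi}_i-\xi_i|)$, subtracting the two relations and multiplying by $\sqrt{n}$ yields the representation
\begin{equation*}
\sqrt{n}\,(\widehat{\xi}_i-\xi_i)=-\frac{1}{f_i(\xi_i)}\,\frac{1}{\sqrt{n}}\sum_{t=1}^{n}Q_{t;i}\;-\;\frac{\alpha_{n,i}(\widehat{\xi}_i)-\alpha_{n,i}(\xi_i)}{f_i(\xi_i)}+o_p(1).
\end{equation*}
The first term is the linear part I want; the second is the Bahadur remainder.

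First I would establish consistency. The representation $\left(\ref{eq:qe:10}\right)$ makes $\{\mathbf{X}_t\}$ stationary and ergodic, so the ergodic theorem gives $\widehat{F}_{n,i}(x)\to F_i(x)$ a.s. for each $x$; monotonicity together with the continuity in A1 upgrades this to uniform (Glivenko--Cantelli) convergence, whence $\widehat{\xi}_i\to\xi_i$ a.s. \emph{The hard part is the remainder} $\alpha_{n,i}(\widehat{\xi}_i)-\alpha_{n,i}(\xi_i)$, i.e. the oscillation of the dependent empirical process over the shrinking random interval between $\xi_i$ and $\widehat{\xi}_i$. I would prove a stochastic equicontinuity estimate $\sup_{|x-\xi_i|\le\varepsilon_n}|\alpha_{n,i}(x)-\alpha_{n,i}(\xi_i)|\overset{p}{\longrightarrow}0$ for $\varepsilon_n\downarrow 0$, which simultaneously delivers the preliminary $O_p(n^{-1/2})$ rate needed to absorb the Taylor error above. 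This is where A2 is used: following \cite{ci:BHS09}, I would replace $X_{t;i}$ by the block-measurable surrogate $X_{tm;i}$ of Definition \ref{def:qe:1}, bound the cost of this approximation through $\left(\ref{eq:qe:9}\right)$ by $\gamma_m+\delta_m=O(m^{-A})$, and apply a maximal inequality for the resulting finitely dependent blocks; the rate $A>4$ is precisely what lets the truncation error and the oscillation term vanish together after choosing $m=m_n\to\infty$ slowly.

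With the remainder negligible in every coordinate, stacking the representations gives the vector form
\begin{equation*}
\sqrt{n}\,(\widehat{\mathbf{\xi}}-\mathbf{\xi})=-V^{-1}\,\frac{1}{\sqrt{n}}\sum_{t=1}^{n}Q_t+o_p(1),
\end{equation*}
with $V$ as in $\left(\ref{eq:qe:13}\right)$, so it remains to show $n^{-1/2}\sum_{t=1}^{n}Q_t\overset{d}{\longrightarrow}N(0,Q)$. I would do this by the Cram\'er--Wold device: for fixed $\lambda\in\mathbb{R}^k$ the scalars $\lambda'Q_t$ form a bounded, zero-mean, stationary, S-mixing sequence, to which I would apply a central limit theorem for S-mixing sequences (again via the $m$-dependent approximation of \cite{ci:BHS09} with Bernstein blocking). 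Boundedness of the indicators makes the moment requirements automatic, and A2 guarantees that the normalised variance converges; passing to $m\to\infty$ identifies the limit as $\lambda'Q\lambda$.

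Finally I would identify $Q$ and conclude. By stationarity the $(i,j)$ entry of $E[Q_0Q_h']$ equals $\mathrm{Cov}(I\{X_{0;i}\le\xi_i\},I\{X_{h;j}\le\xi_j\})=P\bigl(\{X_{t;i}\le\xi_i\}\cap\{X_{t+h;j}\le\xi_j\}\bigr)-p_ip_j$, and summing over $h\in\mathbb{Z}$ produces the numerator of $\left(\ref{eq:qe:16}\right)$; the series converges absolutely because these covariances inherit the $O(|h|^{-A})$ decay of the mixing coefficients. Applying Slutsky's theorem to the displayed vector representation then gives $\sqrt{n}\,(\widehat{\mathbf{\xi}}-\mathbf{\xi})\overset{d}{\longrightarrow}N(0,V^{-1}QV^{-1})$, which is $\left(\ref{eq:qe:11}\right)$--$\left(\ref{eq:qe:12}\right)$, and reading off the $(i,j)$ entry of $V^{-1}QV^{-1}$ yields $\left(\ref{eq:qe:16}\right)$.
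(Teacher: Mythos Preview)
The paper does not prove this theorem at all: it is quoted verbatim as Theorem~1 of \cite{ci:DHOV13} and used as a black box, so there is no ``paper's own proof'' to compare your proposal against. Your sketch is, however, essentially the route taken in \cite{ci:DHOV13} itself (and in the underlying empirical-process machinery of \cite{ci:BHS09}): a coordinatewise Bahadur linearisation, an oscillation/equicontinuity bound for the dependent empirical process obtained via the $m$-dependent approximation of Definition~\ref{def:qe:1}, and a CLT for the bounded indicator sums $Q_t$ via Cram\'er--Wold and blocking. The identification of $\Sigma=V^{-1}QV^{-1}$ and of the entries~\eqref{eq:qe:16} is then immediate, exactly as you say.
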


\begin{remark}
\label{rem:qe:1}Note that all stable distributions satisfy Assumption A1 and that all \textsc{arma} processes satisfy Assumption A2
\end{remark}

\begin{remark}
\label{rem:qe:2}Whilst Theorem \ref{th:qe:4} applies to the
estimation of a single quantile from each component of a vector process, it
can be adapted for the joint estimation of multiple quantiles from a scalar
process $\left\{ X_{t}\right\} $ through application to the vector process 
\begin{equation}
\left\{ \mathbf{X}_{t}\right\} =\left( X_{t},\ldots ,X_{t}\right) ^{\prime }
\label{eq:qe:17}
\end{equation}
\end{remark}

In order to calculate the asymptotic variance, $\Sigma$, of the empirical
quantile estimates from a scalar S-mixing process $\left\{ X_{t}\right\} $,
it is necessary to calculate the joint probabilities 
\begin{equation}
G_{h}\left( \xi _{i},\xi _{j}\right) =P\left( \left\{ X_{t}\leq \xi
_{i}\right\} \cap \left\{ X_{t+h}\leq \xi _{j}\right\} \right)
\label{eq:qe:18}
\end{equation}%
for each $h\in \mathbb{Z}$. For $h=0,$ $\left( \ref{eq:qe:18}\right) $
can be simplified to give%
\begin{equation}
G_{0}\left( \xi _{i},\xi _{j}\right) =\min \left( p_{i},p_{j}\right)
\label{eq:qe:19}
\end{equation}%
For $h\neq 0,$ the evaluation of $\left( \ref{eq:qe:18}\right) $ whilst
theoretically possible for some S-mixing processes is computationally very
difficult for many. For an \textsc{sma}(q) process, the independence of $%
X_{t}$ and $X_{t+h}$ for all $\left\vert h\right\vert >q$ means that $\left( %
\ref{eq:qe:16}\right) $ can be simplified to 
\begin{equation}
\sigma_{ij}=\dfrac{\sum_{h=-q}^{q}\left( G_{h}\left( \xi _{i},\xi _{j}\right)
-p_{i}p_{j}\right) }{f\left( \xi _{i}\right) f\left( \xi _{j}\right) }.
\label{eq:qe:20}
\end{equation}%
For an iid process we get%
\begin{equation}
G_{h}\left( \xi _{i},\xi _{j}\right) =0,\quad \text{for }h\neq 0.
\label{eq:qe:21}
\end{equation}%
Thus for iid processes, Theorem \ref{th:qe:3} produces the same
asymptotic covariance matrix as Theorem \ref{th:qe:4}.

Suppose $\{X_{t}\}$ is an \textsc{sma}(1) process and let $f$ and $F$ denote the density and distribution functions respectively
of the associated innovation sequence $\left\{ \varepsilon _{t}\right\} $. Then 
\begin{eqnarray}
G_{1}\left( \xi _{i},\xi _{j}\right) &=&P\left( \left\{ \varepsilon
_{t}+\theta _{1}\varepsilon _{t-1}\leq \xi _{i}\right\} \cap \left\{
\varepsilon _{t+1}+\theta _{1}\varepsilon _{t}\leq \xi _{j}\right\} \right) 
\notag \\
&=&P\left( \left\{ \varepsilon _{t-1}\leq \dfrac{\xi _{i}-\varepsilon _{t}}{%
\theta _{1}}\right\} \cap \left\{ \varepsilon _{t+1}\leq \xi _{j}-\theta
_{1}\varepsilon _{t}\right\} \right)  \notag \\
&=&\int_{-\infty }^{\infty }F\left( \dfrac{\xi _{i}-u}{\theta _{1}}\right)
F\left( \xi _{j}-\theta _{1}u\right) f\left( u\right) du,
\label{eq:qe:23}
\end{eqnarray}%
which can be evaluated numerically. Note that 
\begin{equation}
G_{1}\left( \xi _{i},\xi _{j}\right) =G_{-1}\left( \xi _{j},\xi _{i}\right)
\label{eq:qe:24}
\end{equation}

For higher order \textsc{sma}(q) processes, the evaluation of $G_{h}\left(
\xi _{i},\xi _{j}\right) $ becomes computationally difficult, involving a $%
q-1+h$ dimensional integral. However the estimation of $G_{h}\left( \xi
_{i},\xi _{j}\right) $ is straightforward. Let $\left\{ x_{t}\right\}
_{t=1}^{n}$ be a sample of size $n$ from the \textsc{sma}%
(q)\ process $\left\{ X_{t}\right\} .$ We define the estimator $\widehat{G}%
_{h}\left( \xi _{i},\xi _{j}\right) $ as%
\begin{equation}
\widehat{G}_{h}\left( \xi _{i},\xi _{j}\right) =\left( n-h\right)
^{-1}\sum_{t=1}I\left\{ x_{t}\leq \xi _{i}\right\} \cdot I\left\{
x_{t+h}\leq \xi _{j}\right\} ,\quad \text{for }\left\vert h\right\vert >1
\label{eq:qe:25}
\end{equation}%
and it is clear that $\widehat{G}_{h}\left( \xi _{i},\xi _{j}\right) $ is a
consistent estimator of $G_{h}\left( \xi _{i},\xi _{j}\right) .$ For the
purposes of this paper we do not consider the asymptotic properties of $%
\widehat{G}_{h}\left( \xi _{i},\xi _{j}\right) .$

\section{Estimation of Stable Distribution Parameters}
\label{sec:est}
The following method for the estimation of stable distribution parameters was proposed in \cite{ci:M86}. Let $\xi _{p}$ denote the $pth$ quantile of the stable 
distribution $S_{\alpha}^{0}\left(\beta ,\gamma ,\delta \right) $ and define the following statistics
\begin{eqnarray}
v_{\alpha } &=&\dfrac{\xi _{0.95}-\xi _{0.05}}{\xi _{0.75}-\xi _{0.25}},
\label{eq:est:1a} \\
v_{\beta } &=&\dfrac{\xi _{0.95}+\xi _{0.05}-2\xi _{0.50}}{\xi _{0.95}-\xi_{0.05}}.
\label{eq:est:1b}
\end{eqnarray}
These statistics do not depend on $\gamma ,\delta$  and so we can consider them as functions solely of $\alpha ,\beta,$
\begin{eqnarray}
v_{\alpha } &=&\phi _{1}\left( \alpha ,\beta \right),
\label{eq:est:2a} \\
v_{\beta } &=&\phi _{2}\left( \alpha ,\beta \right).
\label{eq:est:2b}
\end{eqnarray}

It can be seen that $\phi _{1}\left( \alpha ,\beta \right) $ is a strictly decreasing function of $\alpha $ for each $\beta $ and that 
$\phi _{2}\left(\alpha ,\beta \right) $ is a strictly decreasing function of $\beta $ for each $\alpha .$ The relationships $\left( \ref{eq:est:2a}\right) $
and $\left( \ref{eq:est:2b}\right) $ can be inverted to give
\begin{eqnarray}
\alpha &=&\psi _{1}\left( v_{\alpha },v_{\beta }\right) ,
\label{eq:est:3a} \\
\beta &=&\psi _{2}\left( v_{\alpha },v_{\beta }\right) .
\label{eq:est:3b}
\end{eqnarray}

Let $\widehat{\xi }_{p}$ denote a consistent estimator for $\xi _{p}$.
Substituting the estimators $\widehat{\xi }_{p}$ into $\left( \ref{eq:est:1a}\right) $ and
$\left( \ref{eq:est:1b}\right) $ gives consistent estimators for $v_{\alpha},v_{\beta },$
\begin{eqnarray}
\widehat{v}_{\alpha } &=&\dfrac{\widehat{\xi }_{0.95}-\widehat{\xi }_{0.05}}{\widehat{\xi }_{0.75}-\widehat{\xi }_{0.25}},
\label{eq:est:4a} \\
\widehat{v}_{\beta } &=&\dfrac{\widehat{\xi }_{0.95}+\widehat{\xi }_{0.05}-2\widehat{\xi }_{0.50}}{\widehat{\xi }_{0.95}-\widehat{\xi }_{0.05}}.
\label{eq:est:4b}
\end{eqnarray}
Consistent estimators for the parameters $\alpha ,\beta $ can then be calculated using
\begin{eqnarray}
\widehat{\alpha } &=&\psi _{1}\left( \widehat{v}_{\alpha },\widehat{v}_{\beta }\right),
\label{eq:est:5a} \\
\widehat{\beta } &=&\psi _{2}\left( \widehat{v}_{\alpha },\widehat{v}_{\beta}\right).
\label{eq:est:5b}
\end{eqnarray}

Under the $S^{0}$ parameterisation of the stable distribution, the parameters $\gamma$ and $\delta$ act respectively as scale and location parameters 
of the distribution. We formalise this property in the following lemma.
\begin{lemma}
Let
\begin{equation}
X \sim S_{\alpha}^{0}\left(\beta,\gamma,\delta\right)
\label{eq:est:6}
\end{equation}
and
\begin{equation}
X^{*} \sim S_{\alpha}^{0}\left(\beta,1,0\right)
\label{eq:est:7}
\end{equation}
be stable random variables. Let $\xi_{p}$ and $\xi_{p}^{*}$ denote respectively the $p$th quantile of $X$ and $X^{*}$.
Then for any $0 < p_{1},p_{2} < 1$ where $p_{1} \ne p_{2}$ we have
\begin{equation}
\gamma = \dfrac{\xi_{p_{2}} - \xi_{p_{1}}}{\xi_{p_{2}}^{*} - \xi_{p_{1}}^{*}}
\label{eq:est:8}
\end{equation}
and
\begin{equation}
\delta = \xi_{p_{1}} - \gamma \xi_{p_{1}}^{*}
\label{eq:est:9}
\end{equation}
\label{lem:est:1}
\end{lemma}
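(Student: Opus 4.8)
The plan is to exploit the property, stated in the text, that under Nolan's $S^0$ parameterisation $\gamma$ is a scale parameter and $\delta$ a location parameter; this means precisely that $X \overset{d}{=} \gamma X^* + \delta$. The entire argument then reduces to tracking how quantiles transform under the increasing affine map $x \mapsto \gamma x + \delta$, after which the two formulae drop out of a $2\times 2$ linear system.

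First I would record the distributional identity $X \overset{d}{=} \gamma X^* + \delta$, which is exactly the standardisation property of the $S^0$ parameterisation. Since $\gamma > 0$ the map is strictly increasing, so $F_X(x) = P(\gamma X^* + \delta \le x) = F_{X^*}\!\left((x-\delta)/\gamma\right)$. Because a stable distribution has a density that is positive and continuous everywhere (Assumption A1 of Theorem \ref{th:qe:4}), each CDF is continuous and strictly increasing, and hence the $p$th quantile is the unique solution of $F(\xi_p)=p$. Evaluating the identity above at $\xi_p$ gives $F_{X^*}\!\left((\xi_p-\delta)/\gamma\right)=p=F_{X^*}(\xi_p^*)$, and strict monotonicity of $F_{X^*}$ forces $(\xi_p-\delta)/\gamma = \xi_p^*$. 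This yields the key relation $\xi_p = \gamma\,\xi_p^* + \delta$, valid for every $p \in (0,1)$.

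Second, I would apply this relation at the two levels $p_1$ and $p_2$, obtaining $\xi_{p_1} = \gamma\,\xi_{p_1}^* + \delta$ and $\xi_{p_2} = \gamma\,\xi_{p_2}^* + \delta$. Subtracting eliminates $\delta$ and gives $\xi_{p_2}-\xi_{p_1} = \gamma(\xi_{p_2}^* - \xi_{p_1}^*)$; dividing yields $(\ref{eq:est:8})$. The division is legitimate because $p_1 \neq p_2$ together with the strict monotonicity of $F_{X^*}$ forces $\xi_{p_1}^* \neq \xi_{p_2}^*$, so the denominator is nonzero. Substituting this value of $\gamma$ back into the equation at $p_1$ and rearranging gives $(\ref{eq:est:9})$.

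The one place needing care is the passage from the distributional identity to the quantile relation $\xi_p = \gamma\,\xi_p^* + \delta$; this rests entirely on $\gamma>0$ and on the continuity and strict monotonicity of the stable CDF, which together guarantee that quantiles are uniquely defined and transform affinely. I do not expect any genuine obstacle: once the scale-location property is invoked and these regularity facts are noted, the remainder is elementary.
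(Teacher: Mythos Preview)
Your proposal is correct and mirrors the paper's own proof: both invoke Nolan's Lemma~1 to obtain $\gamma X^{*}+\delta \sim X$, deduce the affine quantile relation $\xi_p = \gamma\,\xi_p^{*}+\delta$ (equivalently $\xi_p^{*}=(\xi_p-\delta)/\gamma$), and read off $(\ref{eq:est:8})$ and $(\ref{eq:est:9})$ from two instances of it. Your version is slightly more explicit about why the quantile map is well defined and why the denominator is nonzero, but the argument is otherwise identical.
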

\begin{proof}
From Lemma 1 (\cite{ci:N98}), we have
\begin{equation}
\gamma X^{*} + \delta \sim X.
\label{eq:est:10}
\end{equation}
It follows that for any $0< p < 1$
\begin{equation}
\xi_{p}^{*} = \dfrac{\xi_{p} - \delta}{\gamma},
\label{eq:est:10}
\end{equation}
from which (\ref{eq:est:8}) and (\ref{eq:est:9}) follow immediately.
\end{proof}

We can use the results of Lemma \ref{lem:est:1} to define the estimators of $\gamma$ and $\delta$ by
\begin{equation}
\widehat{\gamma} = \dfrac{\widehat{\xi}_{0.75} - \widehat{\xi}_{0.25}}
{\widehat{\xi}_{0.75}^{*} - \widehat{\xi}_{0.25}^{*}}
\label{eq:est:11}
\end{equation}
and
\begin{equation}
\widehat{\delta} = \widehat{\xi}_{0.50} - \widehat{\gamma}\widehat{\xi}_{0.50}^{*}.
\label{eq:est:12}
\end{equation}
where $\widehat{\xi}_{p}^{*}$ is the $pth$ quantile of the distribution $S_{\widehat\alpha}^{0}(\widehat\beta,1,0)$.
The estimators in (\ref{eq:est:11}) and (\ref{eq:est:12}) are similar to those defined in \cite{ci:M86}. The differences are due to McCulloch's choice of
parameterisation for the stable distribution, which includes discontinuities at $\alpha = 1$.

From Lemma \ref{lem:est:1}, it can be seen that other choices of quantile levels are available to define $\widehat{\gamma}$ and $\widehat{\delta}$.
For computational efficiency, it is preferable to choose from the same quantile levels used to define $\widehat{\alpha}$ and $\widehat{\beta}$. Indeed, other choices 
of quantile levels are also available to define $\widehat{\alpha}$ and $\widehat{\beta}$ and it is possible that a different choice of quantile levels would
produce better estimators.

Let 
\begin{equation}
\widehat{\xi}_M=\left( \widehat{\xi }_{0.05},\widehat{\xi }_{0.25},\widehat{\xi }_{0.50},\widehat{\xi }_{0.75},\widehat{\xi }_{0.95}\right) ^{\prime }
\label{eq:est:13}
\end{equation}
denote the empirical quantile estimates of 
\begin{equation}
\xi_M=\left( \xi_{0.05},\xi_{0.25},\xi_{0.50},\xi_{0.75},\xi_{0.95}\right) ^{\prime }
\label{eq:est:14}
\end{equation}
from an S-mixing process $\{X_t\}$.  
Let $\Sigma_M$ denote the asymptotic covariance matrix of $\widehat{\xi}_M$ obtained from Theorem \ref{th:qe:4}.
We define the matrix of partial derivatives $D$ by 
\begin{equation}
D=\left(
\begin{array}{ccccc}
\dfrac{\partial \widehat{\alpha }}{\partial \widehat{\xi }_{0.05}} & 
\dfrac{\partial \widehat{\alpha }}{\partial \widehat{\xi }_{0.25}} & 
\dfrac{\partial \widehat{\alpha }}{\partial \widehat{\xi }_{0.50}} &
\dfrac{\partial \widehat{\alpha }}{\partial \widehat{\xi }_{0.75}} &
\dfrac{\partial \widehat{\alpha }}{\partial \widehat{\xi }_{0.95}} \\ 
\dfrac{\partial \widehat{\beta }}{\partial \widehat{\xi }_{0.05}} &
\dfrac{\partial \widehat{\beta }}{\partial \widehat{\xi }_{0.25}} &
\dfrac{\partial \widehat{\beta }}{\partial \widehat{\xi }_{0.50}} &
\dfrac{\partial \widehat{\beta }}{\partial \widehat{\xi }_{0.75}} &
\dfrac{\partial \widehat{\beta }}{\partial \widehat{\xi }_{0.95}} \\
\dfrac{\partial \widehat{\gamma }}{\partial \widehat{\xi }_{0.05}} &
\dfrac{\partial \widehat{\gamma }}{\partial \widehat{\xi }_{0.25}} &
\dfrac{\partial \widehat{\gamma }}{\partial \widehat{\xi }_{0.50}} &
\dfrac{\partial \widehat{\gamma }}{\partial \widehat{\xi }_{0.75}} &
\dfrac{\partial \widehat{\gamma }}{\partial \widehat{\xi }_{0.95}} \\
\dfrac{\partial \widehat{\delta }}{\partial \widehat{\xi }_{0.05}} &
\dfrac{\partial \widehat{\delta }}{\partial \widehat{\xi }_{0.25}} &
\dfrac{\partial \widehat{\delta }}{\partial \widehat{\xi }_{0.50}} &
\dfrac{\partial \widehat{\delta }}{\partial \widehat{\xi }_{0.75}} &
\dfrac{\partial \widehat{\delta }}{\partial \widehat{\xi }_{0.95}}
\end{array}
\right) ^{\prime },
\label{eq:est:15}
\end{equation}
then following the same approach taken by in \cite{ci:M86} using the Multivariate Delta Theorem (see \cite{ci:S80}), we obtain 
\begin{equation}
\sqrt{n}\left( \left( 
\begin{array}{c}
\widehat{\alpha } \\ 
\widehat{\beta } \\
\widehat{\gamma} \\
\widehat{\delta}
\end{array}
\right) -\left( 
\begin{array}{c}
\alpha \\ 
\beta \\
\gamma \\
\delta
\end{array}
\right) \right) \overset{d}{\longrightarrow }N\left( 0,D^{\prime }\Sigma_M
D\right).
\label{eq:est:16}
\end{equation}

A general analytic formula is not available for the calculation of the
partial derivatives in $\left( \ref{eq:est:15}\right) $. It is suggested
in \cite{ci:M86} that the partial derivatives can be estimated ``by means of
small perturbations of the population quantiles'', but no specific
recommendations regarding the size of these perturbations are given. To
limit the scope of our investigation into this matter, we restrict ourselves
to perturbations given by 
\begin{equation}
\Delta \xi =\dfrac{\widehat{\xi }_{0.75}-\widehat{\xi }_{0.25}}{C}
\label{eq:est:17}
\end{equation}%
for some $C>0$ and assume that the same perturbation is applied to each
quantile estimator. Let $\widehat{\alpha }%
_{p}^{+} $ be the estimate of $\alpha $ derived from the set of quantiles
where $\widehat{\xi }_{p}$ is replaced by $\widehat{\xi }_{p}+\Delta \xi $
and $\widehat{\alpha }_{p}^{-}$ be the estimate of $\alpha $ derived from
the set of quantile estimates where $\widehat{\xi }_{p}$ is replaced by $\widehat{\xi 
}_{p}-\Delta \xi .$ Similarly, we define $\widehat{\beta }_{p}^{+},~\widehat{%
\beta }_{p}^{-},$ etc. Our estimate for $\dfrac{\partial \widehat{\alpha }}{%
\partial \widehat{\xi} _{p}}$ is defined to be%
\begin{equation}
\widehat{\dfrac{\partial \widehat{\alpha }}{\partial \widehat{\xi }_{p}}}=%
\dfrac{\widehat{\alpha }_{p}^{+}-\widehat{\alpha }_{p}^{-}}{2\Delta \xi }
\label{eq:SMA:SPX:23}
\end{equation}%
with similar definitions for $\widehat{\dfrac{\partial \widehat{\beta }}{%
\partial \widehat{\xi} _{p}}},\widehat{\dfrac{\partial \widehat{\gamma }}{\partial \widehat{\xi}
_{p}}}$ and $\widehat{\dfrac{\partial \widehat{\delta }}{\partial \widehat{\xi} _{p}}}%
. $

Estimates for each of the partial derivative estimators were calculated for
various stable distributions. Examples of these calculations are presented
in Figure \ref{fig:est:1} for values of $C$ between 50 and 1000. The
optimal choice for $C$ is not obvious, given we do not have any true values
for the partial derivatives. However, in general the value of the partial
derivative estimates does not change greatly for $C$ between 50 and 1000$.$
A\ slightly lower value of $C\ $and hence slightly larger perturbation can
help to smooth the partial derivatives and avoid occasional numerical
abberations. Throughout this paper we use $C=400$, to calculate the partial
derivative estimates. In Figure \ref{fig:est:1}, the estimates
calculated using $C=400$ are those indicated by fourth $^{\prime }\ast
^{\prime }$ from the left.

\begin{figure}[h] \centering
\begin{tabular}{cc}
\includegraphics[width=6.5cm]{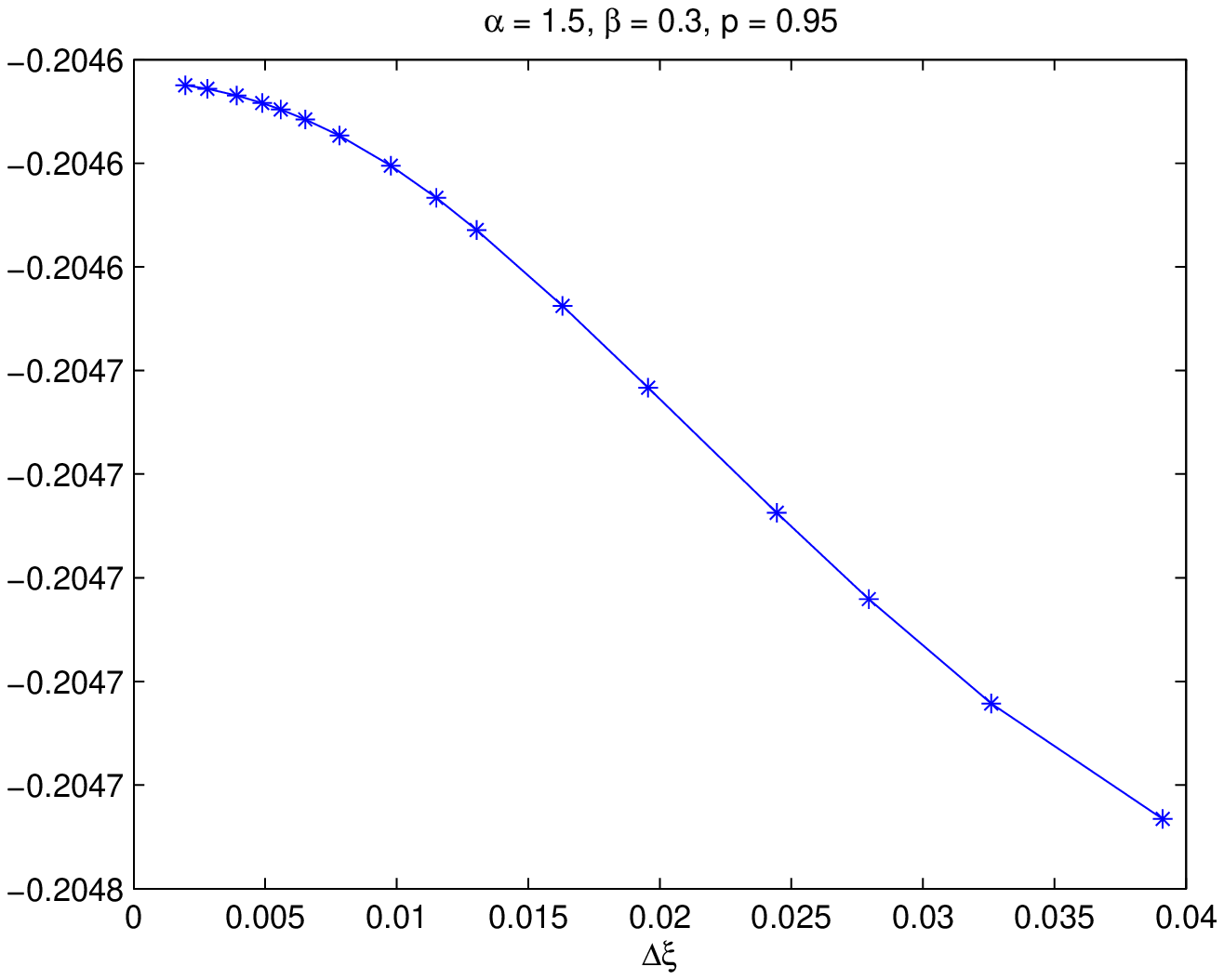} &
\includegraphics[width=6.5cm]{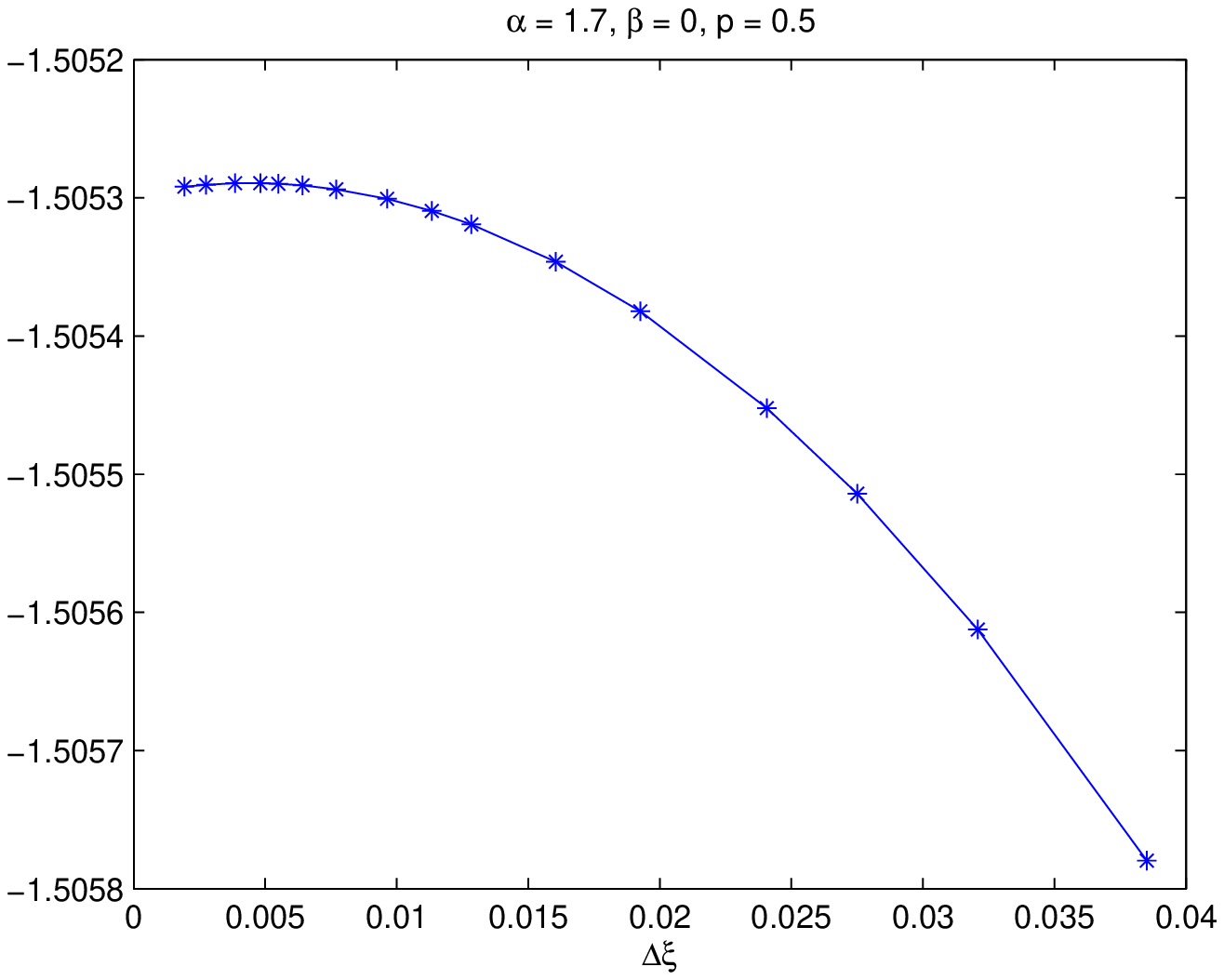} \\
\footnotesize{(a)}&
\footnotesize{(b)}\\
\end{tabular}
\caption{Estimates for (a) 
$\protect\dfrac{\protect\partial \protect\widehat{\protect\alpha }}
{\protect\partial \protect\widehat{\protect\xi }_{0.95}}$ where $\protect\alpha = 1.5$ and $\protect\beta = 0.3$
and (b)
$\protect\dfrac{\protect\partial \protect\widehat{\protect\beta }}
{\protect\partial \protect\widehat{\protect\xi }_{0.50}}$ where $\protect\alpha = 1.7$ and $\protect\beta = 0.0$.}%
\label{fig:est:1}%
\end{figure}%

With some minor modifications by the author, the \textsc{matlab}\ package 
\textsc{stbl}\_\textsc{code}\ was used throughout this paper to generate
sequences of stable random variable, calculate values of the stable density,
distribution and quantile functions. To implement stable distribution
parameter estimation, a lookup table for $\psi _{1}$ and $\psi _{2}$ with $%
184$ values of $v_{\alpha }$ and $86$ values of $v_{\beta }$ was generated.
Interpolation is used to calculate the values of $\psi _{1}$ and $\psi _{2}$
for those values of $v_{\alpha }$ and $v_{\beta }$ which do not exactly
match the lookup table values. Spline interpolation is used in preference to
linear interpolation, except for $\alpha $ close to $2,$ where spline
interpolation occasionally performs poorly. All partial derivatives in
Figure \ref{fig:est:1} were calculated using spline interpolation. If 
linear interpolation was used to calculate the derivatives in Figure \ref%
{fig:est:1}, then the resulting plots would show discontinuities in the
first derivative at points where the values of $v_{\alpha }$ and $v_{\beta }$
move between cells in the lookup tables.

\section{Simulation}
\label{sec:sim}
In this section we present the results of some simulations which demonstrate
the use of the methods described in this paper for the estimation of the
stable distribution parameters of a $\textsc{sma}$(1) process. For selected set of values $%
\alpha $, $\beta $, and $\theta _{1}$ a simulation is run where 2,000
realisations of the process, each of length $n=720$, are generated
. The parameters $\gamma_0 =2$ and $\delta_0 =1$ are fixed for all simulations.
Estimates for the parameters $\alpha $, $\beta$, $\gamma$ and $\delta$ are
calculated for each realisation. The mean and variance of these estimates
across all realisations of a particular simulation are then compared with
the true parameter values and the asymptotic variance of the estimators. The
results for $\alpha $, $\beta$, $\gamma$ and $\delta$ are reported
in Tables \ref{tab:sim:1}, \ref{tab:sim:2}, \ref%
{tab:sim:3} and \ref{tab:sim:4} respectively.

\begin{table}[tbp] \centering%
\begin{tabular}{|c|c|c|c|c|c|c|c|}
\hline\hline
&  & \multicolumn{2}{|c}{$\theta _{1}=0.0$} & \multicolumn{2}{|c|}{$\theta
_{1}=0.2$} & \multicolumn{2}{|c|}{$\theta _{1}=0.4$} \\ \hline
$\alpha $ & $\beta $ & \multicolumn{1}{||c|}{$\text{{\footnotesize (i)}}$} & 
$\text{{\footnotesize (ii)}}$ & $\text{{\footnotesize (i)}}$ & $\text{%
{\footnotesize (ii)}}$ & $\text{{\footnotesize (i)}}$ & $\text{%
{\footnotesize (ii)}}$ \\ \hline\hline
$1.2$ & $0.0$ & \multicolumn{1}{|r|}{$%
\begin{array}{c}
\text{{\small 1.195}} \\ 
\text{{\small (0.061)}}%
\end{array}%
$} & \multicolumn{1}{|r|}{$%
\begin{array}{c}
\text{{\small 2.682}} \\ 
\text{{\small [2.555]}}%
\end{array}%
$} & \multicolumn{1}{|r|}{$%
\begin{array}{c}
\text{{\small 1.195}} \\ 
\text{{\small (0.063)}}%
\end{array}%
$} & \multicolumn{1}{|r|}{$%
\begin{array}{c}
\text{{\small 2.891}} \\ 
\text{{\small [2.740]}}%
\end{array}%
$} & \multicolumn{1}{|r|}{$%
\begin{array}{c}
\text{{\small 1.194}} \\ 
\text{{\small (0.068)}}%
\end{array}%
$} & \multicolumn{1}{|r|}{$%
\begin{array}{c}
\text{{\small 3.322}} \\ 
\text{{\small [3.200]}}%
\end{array}%
$} \\ \hline
$1.2$ & $0.2$ & \multicolumn{1}{|r|}{$%
\begin{array}{c}
\text{{\small 1.196}} \\ 
\text{{\small (0.062)}}%
\end{array}%
$} & \multicolumn{1}{|r|}{$%
\begin{array}{c}
\text{{\small 2.746}} \\ 
\text{{\small [2.833]}}%
\end{array}%
$} & \multicolumn{1}{|r|}{$%
\begin{array}{c}
\text{{\small 1.199}} \\ 
\text{{\small (0.067)}}%
\end{array}%
$} & \multicolumn{1}{|r|}{$%
\begin{array}{c}
\text{{\small 3.224}} \\ 
\text{{\small [3.040]}}%
\end{array}%
$} & \multicolumn{1}{|r|}{$%
\begin{array}{c}
\text{{\small 1.198}} \\ 
\text{{\small (0.072)}}%
\end{array}%
$} & \multicolumn{1}{|r|}{$%
\begin{array}{c}
\text{{\small 3.702}} \\ 
\text{{\small [3.568]}}%
\end{array}%
$} \\ \hline
$1.2$ & $0.5$ & \multicolumn{1}{|r|}{$%
\begin{array}{c}
\text{{\small 1.202}} \\ 
\text{{\small (0.076)}}%
\end{array}%
$} & \multicolumn{1}{|r|}{$%
\begin{array}{c}
\text{{\small 4.134}} \\ 
\text{{\small [3.975]}}%
\end{array}%
$} & \multicolumn{1}{|r|}{$%
\begin{array}{c}
\text{{\small 1.201}} \\ 
\text{{\small (0.076)}}%
\end{array}%
$} & \multicolumn{1}{|r|}{$%
\begin{array}{c}
\text{{\small 4.204}} \\ 
\text{{\small [4.257]}}%
\end{array}%
$} & \multicolumn{1}{|r|}{$%
\begin{array}{c}
\text{{\small 1.200}} \\ 
\text{{\small (0.084)}}%
\end{array}%
$} & \multicolumn{1}{|r|}{$%
\begin{array}{c}
\text{{\small 5.074}} \\ 
\text{{\small [5.058]}}%
\end{array}%
$} \\ \hline
$1.5$ & $0.0$ & \multicolumn{1}{|r|}{$%
\begin{array}{c}
\text{{\small 1.503}} \\ 
\text{{\small (0.076)}}%
\end{array}%
$} & \multicolumn{1}{|r|}{$%
\begin{array}{c}
\text{{\small 4.105}} \\ 
\text{{\small [3.852]}}%
\end{array}%
$} & \multicolumn{1}{|r|}{$%
\begin{array}{c}
\text{{\small 1.504}} \\ 
\text{{\small (0.079)}}%
\end{array}%
$} & \multicolumn{1}{|r|}{$%
\begin{array}{c}
\text{{\small 4.472}} \\ 
\text{{\small [3.984]}}%
\end{array}%
$} & \multicolumn{1}{|r|}{$%
\begin{array}{c}
\text{{\small 1.502}} \\ 
\text{{\small (0.080)}}%
\end{array}%
$} & \multicolumn{1}{|r|}{$%
\begin{array}{c}
\text{{\small 4.604}} \\ 
\text{{\small [4.348]}}%
\end{array}%
$} \\ \hline
$1.5$ & $0.2$ & \multicolumn{1}{|r|}{$%
\begin{array}{c}
\text{{\small 1.504}} \\ 
\text{{\small (0.078)}}%
\end{array}%
$} & \multicolumn{1}{|r|}{$%
\begin{array}{c}
\text{{\small 4.346}} \\ 
\text{{\small [4.076]}}%
\end{array}%
$} & \multicolumn{1}{|r|}{$%
\begin{array}{c}
\text{{\small 1.506}} \\ 
\text{{\small (0.081)}}%
\end{array}%
$} & \multicolumn{1}{|r|}{$%
\begin{array}{c}
\text{{\small 4.693}} \\ 
\text{{\small [4.217]}}%
\end{array}%
$} & \multicolumn{1}{|r|}{$%
\begin{array}{c}
\text{{\small 1.505}} \\ 
\text{{\small (0.087)}}%
\end{array}%
$} & \multicolumn{1}{|r|}{$%
\begin{array}{c}
\text{{\small 5.466}} \\ 
\text{{\small [4.611]}}%
\end{array}%
$} \\ \hline
$1.5$ & $0.5$ & \multicolumn{1}{|r|}{$%
\begin{array}{c}
\text{{\small 1.505}} \\ 
\text{{\small (0.087)}}%
\end{array}%
$} & \multicolumn{1}{|r|}{$%
\begin{array}{c}
\text{{\small 5.384}} \\ 
\text{{\small [5.207]}}%
\end{array}%
$} & \multicolumn{1}{|r|}{$%
\begin{array}{c}
\text{{\small 1.506}} \\ 
\text{{\small (0.091)}}%
\end{array}%
$} & \multicolumn{1}{|r|}{$%
\begin{array}{c}
\text{{\small 5.907}} \\ 
\text{{\small [5.384]}}%
\end{array}%
$} & \multicolumn{1}{|r|}{$%
\begin{array}{c}
\text{{\small 1.506}} \\ 
\text{{\small (0.094)}}%
\end{array}%
$} & \multicolumn{1}{|r|}{$%
\begin{array}{c}
\text{{\small 6.375}} \\ 
\text{{\small [5.919]}}%
\end{array}%
$} \\ \hline
$1.8$ & $0.0$ & \multicolumn{1}{|r|}{$%
\begin{array}{c}
\text{{\small 1.808}} \\ 
\text{{\small (0.108)}}%
\end{array}%
$} & \multicolumn{1}{|r|}{$%
\begin{array}{c}
\text{{\small 8.389}} \\ 
\text{{\small [9.471]}}%
\end{array}%
$} & \multicolumn{1}{|r|}{$%
\begin{array}{c}
\text{{\small 1.809}} \\ 
\text{{\small (0.107)}}%
\end{array}%
$} & \multicolumn{1}{|r|}{$%
\begin{array}{c}
\text{{\small 8.160}} \\ 
\text{{\small [9.544]}}%
\end{array}%
$} & \multicolumn{1}{|r|}{$%
\begin{array}{c}
\text{{\small 1.809}} \\ 
\text{{\small (0.109)}}%
\end{array}%
$} & \multicolumn{1}{|r|}{$%
\begin{array}{c}
\text{{\small 8.495}} \\ 
\text{{\small [9.783]}}%
\end{array}%
$} \\ \hline
$1.8$ & $0.2$ & \multicolumn{1}{|r|}{$%
\begin{array}{c}
\text{{\small 1.810}} \\ 
\text{{\small (0.109)}}%
\end{array}%
$} & \multicolumn{1}{|r|}{$%
\begin{array}{c}
\text{{\small 8.515}} \\ 
\text{{\small [9.536]}}%
\end{array}%
$} & \multicolumn{1}{|r|}{$%
\begin{array}{c}
\text{{\small 1.808}} \\ 
\text{{\small (0.110)}}%
\end{array}%
$} & \multicolumn{1}{|r|}{$%
\begin{array}{c}
\text{{\small 8.678}} \\ 
\text{{\small [9.611]}}%
\end{array}%
$} & \multicolumn{1}{|r|}{$%
\begin{array}{c}
\text{{\small 1.806}} \\ 
\text{{\small (0.109)}}%
\end{array}%
$} & \multicolumn{1}{|r|}{$%
\begin{array}{c}
\text{{\small 8.614}} \\ 
\text{{\small [9.853]}}%
\end{array}%
$} \\ \hline
$1.8$ & $0.5$ & \multicolumn{1}{|r|}{$%
\begin{array}{c}
\text{{\small 1.809}} \\ 
\text{{\small (0.106)}}%
\end{array}%
$} & \multicolumn{1}{|r|}{$%
\begin{array}{c}
\text{{\small 8.083}} \\ 
\text{{\small [9.902]}}%
\end{array}%
$} & \multicolumn{1}{|r|}{$%
\begin{array}{c}
\text{{\small 1.808}} \\ 
\text{{\small (0.108)}}%
\end{array}%
$} & \multicolumn{1}{|r|}{$%
\begin{array}{c}
\text{{\small 8.353}} \\ 
\text{{\small [9.981]}}%
\end{array}%
$} & \multicolumn{1}{|r|}{$%
\begin{array}{c}
\text{{\small 1.809}} \\ 
\text{{\small (0.110)}}%
\end{array}%
$} & \multicolumn{1}{|r|}{$%
\begin{array}{c}
\text{{\small 8.669}} \\ 
\text{{\small [10.24]}}%
\end{array}%
$} \\ \hline
\end{tabular}%
\caption{Simulation results of the estimation of $\protect\alpha$ from selected 
$\protect\textsc{sma}$(1) processes. Reported in
this table for each process are 
(i) the mean and standard deviation, in ( ), of $\protect\widehat{\protect\alpha}$ across all realisations and 
(ii) the variance of $\protect\widehat{\protect\alpha}$ across all realisations multiplied by the sample size $n$ 
and the true asymptotic variance, in [ ].}
\label{tab:sim:1}%
\end{table}%

In each case the mean value of the estimator across all realisations is
within one standard deviation of the true parameter value and is generally
much closer than that. The normalised variance (i.e. the variance multiplied
by the sample size) across all realisations is reasonably close to the
asymptotic variance.

The normalised variance of $\widehat{\alpha }$ where $\alpha =1.8$ appears
to be slightly less than the asymptotic variance. This is due to the
truncation of all $\widehat{\alpha }$ estimates into the range $\left( 0,2%
\right] .$ A similar effect is seen with $\widehat{\beta }$ estimates where $\alpha =1.8$ and $\beta_0 =0.5$. Estimates of $\widehat{%
\beta }$ where $\alpha =1.8$ are the least precise. This
is to be expected as the asymptotic variance of $\widehat{\beta }$ increases to $\infty $ as $\alpha $ increases to 2.

For each of the selected \textsc{sma}(1) processes and for each of the
estimators $\widehat{\alpha }$, $\widehat{\beta }$, $%
\widehat{\gamma }$ and $\widehat{\delta }$ the asymptotic variance of the estimator is higher for $\theta
_{1}=0.2$ than for $\theta _{1}=0.0$ and higher still for $\theta _{1}=0.4.$
The effect of increases in $\theta _{1}$ on the asymptotic variance of the
estimators $\widehat{\alpha }$, $\widehat{\beta }$, $%
\widehat{\gamma }$ and $\widehat{\delta }$ appears to decrease as $\alpha $ increases and is more
significant for $\widehat{\gamma }$ and $\widehat{\delta }%
$ than for $\widehat{\alpha }$ and $\widehat{\beta }$. From additional simulation results not included in
this paper, we observe that the asymptotic variance of $\widehat{\gamma }$ appears symmetric in $\theta _{1}$ about zero, however
that does not appear to be the case for $\widehat{\alpha }$, $\widehat{\beta 
}$ and $\widehat{\delta }$ where more
complicated relationships exist between the asymptotic variances and the
parameter values.

\begin{table}[tbp] \centering%
\begin{tabular}{|c|c|c|c|c|c|c|c|}
\hline\hline
&  & \multicolumn{2}{|c}{$\theta _{1}=0.0$} & \multicolumn{2}{|c|}{$\theta
_{1}=0.2$} & \multicolumn{2}{|c|}{$\theta _{1}=0.4$} \\ \hline
$\alpha $ & $\beta $ & \multicolumn{1}{||c|}{$\text{{\footnotesize (i)}}$} & 
$\text{{\footnotesize (ii)}}$ & $\text{{\footnotesize (i)}}$ & $\text{%
{\footnotesize (ii)}}$ & $\text{{\footnotesize (i)}}$ & $\text{%
{\footnotesize (ii)}}$ \\ \hline\hline
$1.2$ & $0.0$ & \multicolumn{1}{|r|}{$%
\begin{array}{c}
\text{{\small 0.000}} \\ 
\text{{\small (0.105)}}%
\end{array}%
$} & \multicolumn{1}{|r|}{$%
\begin{array}{c}
\text{{\small 7.961}} \\ 
\text{{\small [8.684]}}%
\end{array}%
$} & \multicolumn{1}{|r|}{$%
\begin{array}{c}
\text{{\small 0.002}} \\ 
\text{{\small (0.121)}}%
\end{array}%
$} & \multicolumn{1}{|r|}{$%
\begin{array}{c}
\text{{\small 10.46}} \\ 
\text{{\small [10.91]}}%
\end{array}%
$} & \multicolumn{1}{|r|}{$%
\begin{array}{c}
\text{{\small 0.001}} \\ 
\text{{\small (0.131)}}%
\end{array}%
$} & \multicolumn{1}{|r|}{$%
\begin{array}{c}
\text{{\small 12.30}} \\ 
\text{{\small [12.94]}}%
\end{array}%
$} \\ \hline
$1.2$ & $0.2$ & \multicolumn{1}{|r|}{$%
\begin{array}{c}
\text{{\small 0.193}} \\ 
\text{{\small (0.101)}}%
\end{array}%
$} & \multicolumn{1}{|r|}{$%
\begin{array}{c}
\text{{\small 7.408}} \\ 
\text{{\small [7.677]}}%
\end{array}%
$} & \multicolumn{1}{|r|}{$%
\begin{array}{c}
\text{{\small 0.196}} \\ 
\text{{\small (0.113)}}%
\end{array}%
$} & \multicolumn{1}{|r|}{$%
\begin{array}{c}
\text{{\small 9.230}} \\ 
\text{{\small [9.657]}}%
\end{array}%
$} & \multicolumn{1}{|r|}{$%
\begin{array}{c}
\text{{\small 0.194}} \\ 
\text{{\small (0.124)}}%
\end{array}%
$} & \multicolumn{1}{|r|}{$%
\begin{array}{c}
\text{{\small 11.03}} \\ 
\text{{\small [11.44]}}%
\end{array}%
$} \\ \hline
$1.2$ & $0.5$ & \multicolumn{1}{|r|}{$%
\begin{array}{c}
\text{{\small 0.498}} \\ 
\text{{\small (0.082)}}%
\end{array}%
$} & \multicolumn{1}{|r|}{$%
\begin{array}{c}
\text{{\small 4.866}} \\ 
\text{{\small [4.223]}}%
\end{array}%
$} & \multicolumn{1}{|r|}{$%
\begin{array}{c}
\text{{\small 0.495}} \\ 
\text{{\small (0.089)}}%
\end{array}%
$} & \multicolumn{1}{|r|}{$%
\begin{array}{c}
\text{{\small 5.751}} \\ 
\text{{\small [5.199]}}%
\end{array}%
$} & \multicolumn{1}{|r|}{$%
\begin{array}{c}
\text{{\small 0.494}} \\ 
\text{{\small (0.098)}}%
\end{array}%
$} & \multicolumn{1}{|r|}{$%
\begin{array}{c}
\text{{\small 6.852}} \\ 
\text{{\small [6.046]}}%
\end{array}%
$} \\ \hline
$1.5$ & $0.0$ & \multicolumn{1}{|r|}{$%
\begin{array}{c}
\text{{\small -0.001}} \\ 
\text{{\small (0.127)}}%
\end{array}%
$} & \multicolumn{1}{|r|}{$%
\begin{array}{c}
\text{{\small 11.60}} \\ 
\text{{\small [11.67]}}%
\end{array}%
$} & \multicolumn{1}{|r|}{$%
\begin{array}{c}
\text{{\small 0.001}} \\ 
\text{{\small (0.140)}}%
\end{array}%
$} & \multicolumn{1}{|r|}{$%
\begin{array}{c}
\text{{\small 14.10}} \\ 
\text{{\small [13.27]}}%
\end{array}%
$} & \multicolumn{1}{|r|}{$%
\begin{array}{c}
\text{{\small 0.000}} \\ 
\text{{\small (0.151)}}%
\end{array}%
$} & \multicolumn{1}{|r|}{$%
\begin{array}{c}
\text{{\small 16.44}} \\ 
\text{{\small [15.52]}}%
\end{array}%
$} \\ \hline
$1.5$ & $0.2$ & \multicolumn{1}{|r|}{$%
\begin{array}{c}
\text{{\small 0.202}} \\ 
\text{{\small (0.140)}}%
\end{array}%
$} & \multicolumn{1}{|r|}{$%
\begin{array}{c}
\text{{\small 14.06}} \\ 
\text{{\small [11.16]}}%
\end{array}%
$} & \multicolumn{1}{|r|}{$%
\begin{array}{c}
\text{{\small 0.204}} \\ 
\text{{\small (0.139)}}%
\end{array}%
$} & \multicolumn{1}{|r|}{$%
\begin{array}{c}
\text{{\small 13.88}} \\ 
\text{{\small [12.60]}}%
\end{array}%
$} & \multicolumn{1}{|r|}{$%
\begin{array}{c}
\text{{\small 0.203}} \\ 
\text{{\small (0.151)}}%
\end{array}%
$} & \multicolumn{1}{|r|}{$%
\begin{array}{c}
\text{{\small 16.33}} \\ 
\text{{\small [14.63]}}%
\end{array}%
$} \\ \hline
$1.5$ & $0.5$ & \multicolumn{1}{|r|}{$%
\begin{array}{c}
\text{{\small 0.525}} \\ 
\text{{\small (0.147)}}%
\end{array}%
$} & \multicolumn{1}{|r|}{$%
\begin{array}{c}
\text{{\small 15.44}} \\ 
\text{{\small [11.44]}}%
\end{array}%
$} & \multicolumn{1}{|r|}{$%
\begin{array}{c}
\text{{\small 0.522}} \\ 
\text{{\small (0.152)}}%
\end{array}%
$} & \multicolumn{1}{|r|}{$%
\begin{array}{c}
\text{{\small 16.70}} \\ 
\text{{\small [12.24]}}%
\end{array}%
$} & \multicolumn{1}{|r|}{$%
\begin{array}{c}
\text{{\small 0.520}} \\ 
\text{{\small (0.156)}}%
\end{array}%
$} & \multicolumn{1}{|r|}{$%
\begin{array}{c}
\text{{\small 17.57}} \\ 
\text{{\small [13.45]}}%
\end{array}%
$} \\ \hline
$1.8$ & $0.0$ & \multicolumn{1}{|r|}{$%
\begin{array}{c}
-\text{{\small 0.008}} \\ 
\text{{\small (0.367)}}%
\end{array}%
$} & \multicolumn{1}{|r|}{$%
\begin{array}{c}
\text{{\small 97.20}} \\ 
\text{{\small [53.62]}}%
\end{array}%
$} & \multicolumn{1}{|r|}{$%
\begin{array}{c}
\text{{\small 0.008}} \\ 
\text{{\small (0.369)}}%
\end{array}%
$} & \multicolumn{1}{|r|}{$%
\begin{array}{c}
\text{{\small 97.76}} \\ 
\text{{\small [55.74]}}%
\end{array}%
$} & \multicolumn{1}{|r|}{$%
\begin{array}{c}
\text{{\small 0.000}} \\ 
\text{{\small (0.375)}}%
\end{array}%
$} & \multicolumn{1}{|r|}{$%
\begin{array}{c}
\text{{\small 101.5}} \\ 
\text{{\small [60.59]}}%
\end{array}%
$} \\ \hline
$1.8$ & $0.2$ & \multicolumn{1}{|r|}{$%
\begin{array}{c}
\text{{\small 0.223}} \\ 
\text{{\small (0.367)}}%
\end{array}%
$} & \multicolumn{1}{|r|}{$%
\begin{array}{c}
\text{{\small 97.40}} \\ 
\text{{\small [63.83]}}%
\end{array}%
$} & \multicolumn{1}{|r|}{$%
\begin{array}{c}
\text{{\small 0.202}} \\ 
\text{{\small (0.381)}}%
\end{array}%
$} & \multicolumn{1}{|r|}{$%
\begin{array}{c}
\text{{\small 104.4}} \\ 
\text{{\small [65.85]}}%
\end{array}%
$} & \multicolumn{1}{|r|}{$%
\begin{array}{c}
\text{{\small 0.186}} \\ 
\text{{\small (0.380)}}%
\end{array}%
$} & \multicolumn{1}{|r|}{$%
\begin{array}{c}
\text{{\small 104.1}} \\ 
\text{{\small [70.58]}}%
\end{array}%
$} \\ \hline
$1.8$ & $0.5$ & \multicolumn{1}{|r|}{$%
\begin{array}{c}
\text{{\small 0.488}} \\ 
\text{{\small (0.334)}}%
\end{array}%
$} & \multicolumn{1}{|r|}{$%
\begin{array}{c}
\text{{\small 80.51}} \\ 
\text{{\small [118.5]}}%
\end{array}%
$} & \multicolumn{1}{|r|}{$%
\begin{array}{c}
\text{{\small 0.488}} \\ 
\text{{\small (0.347)}}%
\end{array}%
$} & \multicolumn{1}{|r|}{$%
\begin{array}{c}
\text{{\small 86.49}} \\ 
\text{{\small [120.0]}}%
\end{array}%
$} & \multicolumn{1}{|r|}{$%
\begin{array}{c}
\text{{\small 0.484}} \\ 
\text{{\small (0.353)}}%
\end{array}%
$} & \multicolumn{1}{|r|}{$%
\begin{array}{c}
\text{{\small 89.85}} \\ 
\text{{\small [124.1]}}%
\end{array}%
$} \\ \hline
\end{tabular}%
\caption{Simulation results of the estimation of $\protect\beta$ 
from selected $\protect\textsc{sma}$(1) processes.
Reported in this table for each process are 
(i) the mean and standard deviation, in ( ), of $\protect\widehat{\protect\beta}$ 
across all realisations and 
(ii) the variance of $\protect\widehat{\protect\beta}$ across all realisations 
multiplied by the sample size $n$ and the true asymptotic variance, in [ ].}%
\label{tab:sim:2}%
\end{table}%

These simulations provide some confidence that the estimators discussed in
this paper, are an unbiased method for the estimation of stable
distribution parameters from a \textsc{sma}(1) process and that the
asymptotic variance provides a good approximation for estimator
variance at sample sizes equal to 720.

\begin{table}[tbp] \centering%
\begin{tabular}{|c|c||c|c|c|c|c|c|}
\hline\hline
&  & \multicolumn{3}{|c}{$\theta _{1}=0.0$} & \multicolumn{3}{|c|}{$\theta
_{1}=0.4$} \\ \hline
$\alpha $ & $\beta $ & $\gamma$ & $\text{{\footnotesize %
(i)}}$ & $\text{{\footnotesize (ii)}}$ & $\gamma$ & $%
\text{{\footnotesize (i)}}$ & $\text{{\footnotesize (ii)}}$ \\ \hline\hline
$1.2$ & $0.0$ & $\text{{\small 2.000}}$ & \multicolumn{1}{|r|}{$%
\begin{array}{c}
\text{{\small 1.991}} \\ 
\text{{\small (0.104)}}%
\end{array}%
$} & \multicolumn{1}{|r|}{$%
\begin{array}{c}
\text{{\small 7.751}} \\ 
\text{{\small [7.983]}}%
\end{array}%
$} & $\text{{\small 2.}}${\small 541} & \multicolumn{1}{|r|}{$%
\begin{array}{c}
\text{{\small 2.523}} \\ 
\text{{\small (0.153)}}%
\end{array}%
$} & \multicolumn{1}{|r|}{$%
\begin{array}{c}
\text{{\small 16.82}} \\ 
\text{{\small [16.32]}}%
\end{array}%
$} \\ \hline
$1.2$ & $0.2$ & $\text{{\small 2.000}}$ & \multicolumn{1}{|r|}{$%
\begin{array}{c}
\text{{\small 1.992}} \\ 
\text{{\small (0.112)}}%
\end{array}%
$} & \multicolumn{1}{|r|}{$%
\begin{array}{c}
\text{{\small 9.000}} \\ 
\text{{\small [8.648]}}%
\end{array}%
$} & $\text{{\small 2.541}}$ & \multicolumn{1}{|r|}{$%
\begin{array}{c}
\text{{\small 2.535}} \\ 
\text{{\small (0.156)}}%
\end{array}%
$} & \multicolumn{1}{|r|}{$%
\begin{array}{c}
\text{{\small 17.58}} \\ 
\text{{\small [17.52]}}%
\end{array}%
$} \\ \hline
$1.2$ & $0.5$ & $\text{{\small 2.000}}$ & \multicolumn{1}{|r|}{$%
\begin{array}{c}
\text{{\small 2.001}} \\ 
\text{{\small (0.123)}}%
\end{array}%
$} & \multicolumn{1}{|r|}{$%
\begin{array}{c}
\text{{\small 10.83}} \\ 
\text{{\small [10.80]}}%
\end{array}%
$} & $\text{{\small 2.541}}$ & \multicolumn{1}{|r|}{$%
\begin{array}{c}
\text{{\small 2.543}} \\ 
\text{{\small (0.171)}}%
\end{array}%
$} & \multicolumn{1}{|r|}{$%
\begin{array}{c}
\text{{\small 21.14}} \\ 
\text{{\small [21.15]}}%
\end{array}%
$} \\ \hline
$1.5$ & $0.0$ & $\text{{\small 2.000}}$ & \multicolumn{1}{|r|}{$%
\begin{array}{c}
\text{{\small 2.000}} \\ 
\text{{\small (0.097)}}%
\end{array}%
$} & \multicolumn{1}{|r|}{$%
\begin{array}{c}
\text{{\small 6.822}} \\ 
\text{{\small [6.553]}}%
\end{array}%
$} & $\text{{\small 2.325}}$ & \multicolumn{1}{|r|}{$%
\begin{array}{c}
\text{{\small 2.322}} \\ 
\text{{\small (0.120)}}%
\end{array}%
$} & \multicolumn{1}{|r|}{$%
\begin{array}{c}
\text{{\small 10.32}} \\ 
\text{{\small [10.45]}}%
\end{array}%
$} \\ \hline
$1.5$ & $0.2$ & $\text{{\small 2.000}}$ & \multicolumn{1}{|r|}{$%
\begin{array}{c}
\text{{\small 1.997}} \\ 
\text{{\small (0.094)}}%
\end{array}%
$} & \multicolumn{1}{|r|}{$%
\begin{array}{c}
\text{{\small 6.290}} \\ 
\text{{\small [6.633]}}%
\end{array}%
$} & $\text{{\small 2.325}}$ & \multicolumn{1}{|r|}{$%
\begin{array}{c}
\text{{\small 2.318}} \\ 
\text{{\small (0.120)}}%
\end{array}%
$} & \multicolumn{1}{|r|}{$%
\begin{array}{c}
\text{{\small 10.37}} \\ 
\text{{\small [10.54]}}%
\end{array}%
$} \\ \hline
$1.5$ & $0.5$ & $\text{{\small 2.000}}$ & \multicolumn{1}{|r|}{$%
\begin{array}{c}
\text{{\small 2.000}} \\ 
\text{{\small (0.098)}}%
\end{array}%
$} & \multicolumn{1}{|r|}{$%
\begin{array}{c}
\text{{\small 6.838}} \\ 
\text{{\small [6.877]}}%
\end{array}%
$} & $\text{{\small 2.325}}$ & \multicolumn{1}{|r|}{$%
\begin{array}{c}
\text{{\small 2.321}} \\ 
\text{{\small (0.125)}}%
\end{array}%
$} & \multicolumn{1}{|r|}{$%
\begin{array}{c}
\text{{\small 11.24}} \\ 
\text{{\small [10.77]}}%
\end{array}%
$} \\ \hline
$1.8$ & $0.0$ & $\text{{\small 2.000}}$ & \multicolumn{1}{|r|}{$%
\begin{array}{c}
\text{{\small 2.000}} \\ 
\text{{\small (0.093)}}%
\end{array}%
$} & \multicolumn{1}{|r|}{$%
\begin{array}{c}
\text{{\small 6.162}} \\ 
\text{{\small [6.272]}}%
\end{array}%
$} & $\text{{\small 2.205}}$ & \multicolumn{1}{|r|}{$%
\begin{array}{c}
\text{{\small 2.204}} \\ 
\text{{\small (0.108)}}%
\end{array}%
$} & \multicolumn{1}{|r|}{$%
\begin{array}{c}
\text{{\small 8.407}} \\ 
\text{{\small [8.482]}}%
\end{array}%
$} \\ \hline
$1.8$ & $0.2$ & $\text{{\small 2.000}}$ & \multicolumn{1}{|r|}{$%
\begin{array}{c}
\text{{\small 2.000}} \\ 
\text{{\small (0.094)}}%
\end{array}%
$} & \multicolumn{1}{|r|}{$%
\begin{array}{c}
\text{{\small 6.417}} \\ 
\text{{\small [6.267]}}%
\end{array}%
$} & $\text{{\small 2.205}}$ & \multicolumn{1}{|r|}{$%
\begin{array}{c}
\text{{\small 2.200}} \\ 
\text{{\small (0.108)}}%
\end{array}%
$} & \multicolumn{1}{|r|}{$%
\begin{array}{c}
\text{{\small 8.378}} \\ 
\text{{\small [8.473]}}%
\end{array}%
$} \\ \hline
$1.8$ & $0.5$ & $\text{{\small 2.000}}$ & \multicolumn{1}{|r|}{$%
\begin{array}{c}
\text{{\small 2.003}} \\ 
\text{{\small (0.093)}}%
\end{array}%
$} & \multicolumn{1}{|r|}{$%
\begin{array}{c}
\text{{\small 6.146}} \\ 
\text{{\small [6.227]}}%
\end{array}%
$} & $\text{{\small 2.205}}$ & \multicolumn{1}{|r|}{$%
\begin{array}{c}
\text{{\small 2.210}} \\ 
\text{{\small (0.111)}}%
\end{array}%
$} & \multicolumn{1}{|r|}{$%
\begin{array}{c}
\text{{\small 8.799}} \\ 
\text{{\small [8.408]}}%
\end{array}%
$} \\ \hline
\end{tabular}%
\caption{Simulation results of the estimation of $\protect\gamma$ 
from selected $\protect\textsc{sma}$(1) processes.
Reported in this table for each process are 
(i) the mean and standard deviation, in ( ), of $\protect\widehat{\protect\gamma}$ 
across all realisations and 
(ii) the variance of $\protect\widehat{\protect\gamma}$ across all realisations 
multiplied by the sample size $n$ and the true asymptotic variance, in [ ].}%
\label{tab:sim:3}%
\end{table}%

\begin{table}[tbp] \centering%
\begin{tabular}{|c|c||c|c|c|c|c|c|}
\hline\hline
&  & \multicolumn{3}{|c}{$\theta _{1}=0.0$} & \multicolumn{3}{|c|}{$\theta
_{1}=0.4$} \\ \hline
$\alpha $ & $\beta $ & $\delta$ & $\text{{\footnotesize %
(i)}}$ & $\text{{\footnotesize (ii)}}$ & $\delta$ & $%
\text{{\footnotesize (i)}}$ & $\text{{\footnotesize (ii)}}$ \\ \hline\hline
$1.2$ & $0.0$ & $\text{{\small 1.000}}$ & \multicolumn{1}{|r|}{$%
\begin{array}{c}
\text{{\small 0.998}} \\ 
\text{{\small (0.130)}}%
\end{array}%
$} & \multicolumn{1}{|r|}{$%
\begin{array}{c}
\text{{\small 12.15}} \\ 
\text{{\small [13.05]}}%
\end{array}%
$} & $\text{{\small 1.}}${\small 400} & \multicolumn{1}{|r|}{$%
\begin{array}{c}
\text{{\small 1.398}} \\ 
\text{{\small (0.194)}}%
\end{array}%
$} & \multicolumn{1}{|r|}{$%
\begin{array}{c}
\text{{\small 27.04}} \\ 
\text{{\small [28.32]}}%
\end{array}%
$} \\ \hline
$1.2$ & $0.2$ & $\text{{\small 1.000}}$ & \multicolumn{1}{|r|}{$%
\begin{array}{c}
\text{{\small 1.003}} \\ 
\text{{\small (0.134)}}%
\end{array}%
$} & \multicolumn{1}{|r|}{$%
\begin{array}{c}
\text{{\small 12.95}} \\ 
\text{{\small [13.19]}}%
\end{array}%
$} & $\text{{\small 1.559}}$ & \multicolumn{1}{|r|}{$%
\begin{array}{c}
\text{{\small 1.558}} \\ 
\text{{\small (0.197)}}%
\end{array}%
$} & \multicolumn{1}{|r|}{$%
\begin{array}{c}
\text{{\small 27.97}} \\ 
\text{{\small [28.62]}}%
\end{array}%
$} \\ \hline
$1.2$ & $0.5$ & $\text{{\small 1.000}}$ & \multicolumn{1}{|r|}{$%
\begin{array}{c}
\text{{\small 1.001}} \\ 
\text{{\small (0.139)}}%
\end{array}%
$} & \multicolumn{1}{|r|}{$%
\begin{array}{c}
\text{{\small 14.00}} \\ 
\text{{\small [14.11]}}%
\end{array}%
$} & $\text{{\small 1.798}}$ & \multicolumn{1}{|r|}{$%
\begin{array}{c}
\text{{\small 1.812}} \\ 
\text{{\small (0.208)}}%
\end{array}%
$} & \multicolumn{1}{|r|}{$%
\begin{array}{c}
\text{{\small 31.15}} \\ 
\text{{\small [30.41]}}%
\end{array}%
$} \\ \hline
$1.5$ & $0.0$ & $\text{{\small 1.000}}$ & \multicolumn{1}{|r|}{$%
\begin{array}{c}
\text{{\small 1.001}} \\ 
\text{{\small (0.143)}}%
\end{array}%
$} & \multicolumn{1}{|r|}{$%
\begin{array}{c}
\text{{\small 14.73}} \\ 
\text{{\small [15.49]}}%
\end{array}%
$} & $\text{{\small 1.400}}$ & \multicolumn{1}{|r|}{$%
\begin{array}{c}
\text{{\small 1.397}} \\ 
\text{{\small (0.200)}}%
\end{array}%
$} & \multicolumn{1}{|r|}{$%
\begin{array}{c}
\text{{\small 28.71}} \\ 
\text{{\small [27.46]}}%
\end{array}%
$} \\ \hline
$1.5$ & $0.2$ & $\text{{\small 1.000}}$ & \multicolumn{1}{|r|}{$%
\begin{array}{c}
\text{{\small 1.005}} \\ 
\text{{\small (0.148)}}%
\end{array}%
$} & \multicolumn{1}{|r|}{$%
\begin{array}{c}
\text{{\small 15.73}} \\ 
\text{{\small [15.61]}}%
\end{array}%
$} & $\text{{\small 1.495}}$ & \multicolumn{1}{|r|}{$%
\begin{array}{c}
\text{{\small 1.500}} \\ 
\text{{\small (0.198)}}%
\end{array}%
$} & \multicolumn{1}{|r|}{$%
\begin{array}{c}
\text{{\small 28.22}} \\ 
\text{{\small [27.68]}}%
\end{array}%
$} \\ \hline
$1.5$ & $0.5$ & $\text{{\small 1.000}}$ & \multicolumn{1}{|r|}{$%
\begin{array}{c}
\text{{\small 0.993}} \\ 
\text{{\small (0.150)}}%
\end{array}%
$} & \multicolumn{1}{|r|}{$%
\begin{array}{c}
\text{{\small 16.27}} \\ 
\text{{\small [16.58]}}%
\end{array}%
$} & $\text{{\small 1.638}}$ & \multicolumn{1}{|r|}{$%
\begin{array}{c}
\text{{\small 1.633}} \\ 
\text{{\small (0.201)}}%
\end{array}%
$} & \multicolumn{1}{|r|}{$%
\begin{array}{c}
\text{{\small 29.09}} \\ 
\text{{\small [29.24]}}%
\end{array}%
$} \\ \hline
$1.8$ & $0.0$ & $\text{{\small 1.000}}$ & \multicolumn{1}{|r|}{$%
\begin{array}{c}
\text{{\small 0.997}} \\ 
\text{{\small (0.160)}}%
\end{array}%
$} & \multicolumn{1}{|r|}{$%
\begin{array}{c}
\text{{\small 18.49}} \\ 
\text{{\small [19.50]}}%
\end{array}%
$} & $\text{{\small 1.400}}$ & \multicolumn{1}{|r|}{$%
\begin{array}{c}
\text{{\small 1.395}} \\ 
\text{{\small (0.202)}}%
\end{array}%
$} & \multicolumn{1}{|r|}{$%
\begin{array}{c}
\text{{\small 29.35}} \\ 
\text{{\small [30.46]}}%
\end{array}%
$} \\ \hline
$1.8$ & $0.2$ & $\text{{\small 1.000}}$ & \multicolumn{1}{|r|}{$%
\begin{array}{c}
\text{{\small 1.010}} \\ 
\text{{\small (0.159)}}%
\end{array}%
$} & \multicolumn{1}{|r|}{$%
\begin{array}{c}
\text{{\small 18.15}} \\ 
\text{{\small [19.61]}}%
\end{array}%
$} & $\text{{\small 1.439}}$ & \multicolumn{1}{|r|}{$%
\begin{array}{c}
\text{{\small 1.454}} \\ 
\text{{\small (0.199)}}%
\end{array}%
$} & \multicolumn{1}{|r|}{$%
\begin{array}{c}
\text{{\small 28.62}} \\ 
\text{{\small [30.59]}}%
\end{array}%
$} \\ \hline
$1.8$ & $0.5$ & $\text{{\small 1.000}}$ & \multicolumn{1}{|r|}{$%
\begin{array}{c}
\text{{\small 1.018}} \\ 
\text{{\small (0.160)}}%
\end{array}%
$} & \multicolumn{1}{|r|}{$%
\begin{array}{c}
\text{{\small 18.35}} \\ 
\text{{\small [20.10]}}%
\end{array}%
$} & $\text{{\small 1.497}}$ & \multicolumn{1}{|r|}{$%
\begin{array}{c}
\text{{\small 1.512}} \\ 
\text{{\small (0.203)}}%
\end{array}%
$} & \multicolumn{1}{|r|}{$%
\begin{array}{c}
\text{{\small 29.80}} \\ 
\text{{\small [31.24]}}%
\end{array}%
$} \\ \hline
\end{tabular}%
\caption{Simulation results of the estimation of $\protect\delta$ 
from selected $\protect\textsc{sma}$(1) processes.
Reported in this table for each process are 
(i) the mean and standard deviation, in ( ), of $\protect\widehat{\protect\delta}$ 
across all realisations and 
(ii) the variance of $\protect\widehat{\protect\delta}$ across all realisations
multiplied by the sample size $n$ and the true asymptotic variance, in [ ].}%
\label{tab:sim:4}%
\end{table}%

\clearpage

\bibliographystyle{model2-names}
\bibliography{acompat,EstimationOfStableDistributionParametersFromADependentSample}

\end{document}